\documentclass[11pt]{amsart}

\usepackage{amssymb}
\usepackage{amsmath}
\usepackage{enumerate}
\usepackage[latin1]{inputenc}
\usepackage[dvips]{graphicx}

\def\s{\mathbb{S}}
\def\h{\mathbb{H}}
\def\r{\mathbb{R}}

\def\c{\mathbb{C}}
\def\p{\mathbb{P}}

\newtheorem{theorem}{Theorem}
\newtheorem{proposition}{Proposition}
\newtheorem{corollary}{Corollary}
\newtheorem{lemma}{Lemma}

  \theoremstyle{definition}
\newtheorem{definition}{Definition}
\newtheorem{example}{Example}

  \theoremstyle{remark}
\newtheorem{remark}{Remark}

\numberwithin{equation}{section}

\hyphenation{ mi-ni-mal e-xam-ples de-fi-ni-tion cons-truc-tions
cri-ti-cal bet-ween cons-truc-ted pro-blem ma-ni-folds o-rien-ted
ge-ne-ra-ted iso-me-try ta-king sy-mme-tric geo-me-try
cha-rac-te-ris-tic o-pe-ra-tor ge-ne-ra-li-za-tion um-bi-li-cal studied}

\DeclareMathOperator{\sn}{sn}
\DeclareMathOperator{\cn}{cn}
\DeclareMathOperator{\dn}{dn}

\begin{document}

\title[Surfaces with parallel mean curvature vector]{Surfaces with parallel mean curvature vector in $\s^2\times\s^2$ and $\h^2\times\h^2$}

\author{Francisco Torralbo}
\address{Departamento de Geometr\'{\i}a  y Topolog\'{\i}a \\
Universidad de Granada \\
18071 Granada, SPAIN} \email{ftorralbo@ugr.es}

\author{Francisco Urbano}
\address{Departamento de Geometr\'{\i}a  y Topolog\'{\i}a \\
Universidad de Granada \\
18071 Granada, SPAIN} \email{furbano@ugr.es}

\thanks{Research partially supported by a MCyT-Feder research project MTM2007-61775 and a Junta Andalucía  Grant P06-FQM-01642.}

%\subjclass{Primary 53C42, 53B25; Secondary 53A05, 53D12}

%\keywords{Hamiltonian minimal Lagrangian surfaces, Willmore
%surfaces, Hamiltonian stationary Lagrangian spheres}

\date{}

\begin{abstract}
Two holomorphic Hopf differentials for surfaces of non-null parallel mean curvature vector in $\s^2\times\s^2$ and $\h^2\times\h^2$ are constructed. A 1:1 correspondence between these surfaces and pairs of constant mean curvature surfaces of $\s^2\times\r$ and $\h^2\times\r$ is established. Using that, surfaces with vanishing Hopf differentials (in particular spheres with parallel mean curvature vector) are classified and a rigidity result for constant mean curvature surfaces of $\s^2\times\r$ and $\h^2\times\r$ is proved.
\end{abstract}

\maketitle

\section{Introduction}
Surfaces with constant mean curvature (CMC-surfaces) in three manifolds is a classic topic in differential geometry and it has been extensively studied when the ambient manifold has constant curvature. In 2004, Abresh and Rosenberg \cite{AR} studied CMC-surfaces in $\s^2\times \r$ and $\h^2\times \r$ where $\s^2$ (respectively $\h^2$) are the two-dimensional sphere (respectively the hyperbolic plane). They defined on such surfaces a holomorphic two-differential which generalizes the classical Hopf differential defined for CMC-surfaces of space forms. They also classified those CMC-surfaces with vanishing Hopf-differential. In particular, they classified the orientable CMC-surfaces of genus zero in $\s^2\times \r$ and $\h^2\times\r$.

When the codimension of the surface is bigger than one, the natural generalization of these type of surfaces are the surfaces with parallel mean curvature vector (in what follows PMC-surfaces). Although there are results for codimension bigger than two, the most relevant ones are obtained when the codimension is two. In 1971, Ferus \cite{F} proved that a genus zero orientable surface with (non-null) parallel mean curvature vector in a simply-connected space form is a round sphere. In \cite{C} and \cite{Y}, Chen and Yau independently classified all the surfaces with parallel mean curvature vector in space forms, proving that they are  CMC-surfaces of three dimensional umbilical hypersurfaces. Both results are based on the following fact: if $H$ is the mean curvature vector of the surface, as the dimension of the normal bundle is two, it is possible to consider another parallel vector field in the normal bundle $\tilde{H}$ orthogonal to $H$ with the same length and to define two holomorphic Hopf differentials associated to $H$ and $\tilde{H}$.

In 2000, Kenmotsu and Zhou \cite{KZ} classified surfaces with parallel mean curvature vector in the complex  projective and the complex hyperbolic planes. In this case, it is well known that there are not umbilical hypersurfaces of these $4$-manifolds, therefore there is not a method like in space forms to construct surfaces of parallel mean curvature vector. The authors did not use the existence of Hopf differentials. Instead,  they reduced the classification theorem, using a result by Ogata \cite{O}, to solve an O.D.E. system on the surface. Using an analytic method, they classified these surfaces, proving that there are few of them and they have a good behavior with respect to the complex structure of the ambient space.

In this paper we study surfaces with parallel mean curvature vector in $\s^2\times\s^2$ and $\h^2\times\h^2$. In this case, although there are umbilical hypersurfaces of the ambient space, only the totally geodesic ones (up to congruences $\s^2\times\r$ and $\h^2\times\r$) have constant mean curvature (see Proposition~\ref{prop:subvariedades-totalmente-umbilicales-CMC}) and so CMC-surfaces of $\s^2\times\r$ and $\h^2\times\r$ are surfaces with parallel mean curvature vector in $\s^2\times\s^2$ and $\h^2\times\h^2$ respectively.

 The most important idea in the paper is the construction of two holomorphic Hopf differentials on  PMC-surfaces of $\s^2\times\s^2$ and $\h^2\times\h^2$ (see section 3) which generalize the Abresh-Rosenberg differential in the sense that if a PMC-surface of $\s^2\times\s^2$ or $\h^2\times\h^2$ factorizes through a CMC-surface of $\s^2\times\r$ or $\h^2\times\r$, both Hopf differentials are equals and they coincide (up to a constant) with the Abresh-Rosenberg differential (see Lemma~\ref{lm:diferencial-AR}). To define these Hopf differentials we use the two K\"ahler structures that these $4$-manifolds have (see section 3).

In section 4 we prove the main results of the paper. Theorem~\ref{tm:relacion-PMC-CMC} proves that given a simply-connected Riemannian surface $(\Sigma,g)$ there exists, up to congruences, a 1:1 correspondence between PMC-isometric immersions of $(\Sigma,g)$ in $\s^2\times \s^2$ (respectively $\h^2\times\h^2$) and pairs of CMC-isometric immersions of $(\Sigma,g)$ in $\s^2\times\r$ (respectively $\h^2\times\r$). Moreover these two CMC-surfaces are congruent if and only if the corresponding PMC-immersion factorizes through a CMC-immersion of $\s^2\times\r$ or $\h^2\times\r$. So the existence of full PMC-immersions in $\s^2\times \s^2$ and $\h^2\times\h^2$ is deeply related to the rigidity of CMC-immersions in $\s^2\times\r$ and $\h^2\times\r$.

In Theorem~\ref{tm:PMC-lagrangianas} we classify an important family of surfaces of $\s^2\times\s^2$ and $\h^2\times\h^2$ with parallel mean curvature vector: those that are Lagrangian surfaces with respect to some of the two K\"ahler structures that these manifolds have. Theorem~\ref{tm:PMC-K-normal-nula} is the most important contribution of the paper, it classifies the surfaces with parallel mean curvature vector with null extrinsic normal curvature. In the classification it appears the CMC-surfaces of $\s^2\times\r$ and $\h^2\times\r$, the Lagrangian PMC-surfaces and a new family of PMC-surfaces invariant under $1$-parameter groups of isometries of $\s^2\times \s^2$ and $\h^2\times\h^2$ which are described in Proposition~\ref{prop:familia-ejemplos-K-normal-0}. This result allows us to classify the parallel mean curvature surfaces with vanishing Hopf-differentials (Theorem~\ref{tm:PMC-diferenciales-nulas}) and in particular the parallel mean curvature spheres of $\s^2\times\s^2$ and $\h^2\times\h^2$ (Corollary~\ref{cor:PMC-spheres}).

In section 5, using Theorem~\ref{tm:relacion-PMC-CMC} and the examples of Proposition~\ref{prop:familia-ejemplos-K-normal-0}, we construct examples of CMC-surfaces in $\s^2\times\r$ and $\h^2\times\r$. Among them it is interesting to remark a two-parameter family of CMC-embedded tori in $\s^2\times\s^1$ (Proposition~\ref{prop:toros}). Moreover, Corollary~\ref{cor:rigidez} is a rigidity result for CMC-surfaces of $\s^2\times\r$ and $\h^2\times\r$. Finally, in section 6 we study general properties of compact PMC-immersions in $\s^2\times \s^2$ and $\h^2\times\h^2$.

 The product of two Riemannian surfaces with different constant curvatures is not an Einstein manifold and this is a big problem in order to study its PMC-surfaces. Following the ideas developed in this paper, on a PMC-surface of the product of two Riemannian surfaces with constant curvatures it is possible to define a holomorphic $2$-differential, which coincides with the sum of the two Hopf differentials when the constant curvatures are equals. When these curvatures are opposite, this holomorphic differential was defined in \cite{LV}.

\section{Preliminaries and examples}

We denote by $M^2(\epsilon)$, $\epsilon=1,-1$, the two-dimensional sphere $\s^2=\{x\in\r^3\,|\,x_1^2+x_2^2+x_3^2=1\}$ endowed with the canonical metric of constant curvature $1$ when $\epsilon=1$ and the hyperbolic plane  $\h^2=\{x\in\r^3\,|\,x_1^2+x_2^2-x_3^2=-1, \, x_3 > 0\}$ endowed with the canonical metric of constant curvature $-1$ when $\epsilon=-1$.  We denote by $\omega$ the K\"ahler $2$-form on $M^2(\epsilon)$ and by $J$ the corresponding complex structure, i.e. $\omega(\, , \, ) = \langle J\, , \rangle$ where $\langle\, , \, \rangle$ denotes the metric of $M^2(\epsilon)$.

If we consider $M^2(\epsilon)\times M^2(\epsilon)$ endowed with the product metric, which will be also denoted by $\langle,\rangle$, then it is an orientable Einstein manifold with scalar curvature $4\epsilon$. The orientation will be given by the $4$-form $\pi_1^*\omega\wedge\pi_2^*\omega$, where $\pi_j$, $j=1,2$ are the projections on the factors.

Along the paper we will consider $M^2(\epsilon)\times M^2(\epsilon)$ embedded isometrically in $\r^3\times\r^3\equiv\r^6$ when $\epsilon=1$ and in $\r^3_1\times\r^3_1\equiv\r^6_2$ when $\epsilon=-1$, being $\r^3_1$ the Lorentz-Minkowski $3$-space.

Let $\Phi:\Sigma\rightarrow M^2(\epsilon)\times M^2(\epsilon)$ be an immersion of an oriented surface $\Sigma$.
If $T^{\perp}\Sigma$ is the normal bundle of $\Phi$, then we have the orthogonal decomposition
\[
\Phi^*T(M^2(\epsilon)\times M^2(\epsilon))=T\Sigma\oplus T^{\perp}\Sigma.
\]
Let $\bar {\nabla}$ be the connection on $\Phi^*T(M^2(\epsilon)\times M^2(\epsilon))$ induced by the Levi--Civita connection of $M^2(\epsilon)\times M^2(\epsilon)$ and let $\bar{\nabla}=\nabla +\nabla^{\perp}$ be the corresponding decomposition.
If $\{e_1,e_2,e_3,e_4\}$ is an oriented orthonormal local frame on $\Phi^*T(M^2(\epsilon)\times M^2(\epsilon))$ such that $\{e_1,e_2\}$ is an oriented frame on $T\Sigma$, then we define the {\em normal curvature} $K^{\perp}$ of the immersion $\Phi$ by
\[
K^{\perp}=R^{\perp}(e_1,e_2,e_3,e_4),
\]
where $R^{\perp}$ is the curvature tensor of the normal connection $\nabla^{\perp}$.
Also we define the {\it extrinsic normal curvature} $\bar K^{\perp}$ as the the function on $\Sigma$ given by
\[
\bar K^{\perp}=\bar R(e_1,e_2,e_3,e_4),
\]
where $\bar{R}$ is the curvature tensor of $\bar\nabla$.
\begin{definition}
Let $\Phi:\Sigma\rightarrow\ M^2(\epsilon)\times M^2(\epsilon)$ be an immersion. We say that $\Phi$ has non-null parallel mean curvature vector, from now on PMC-immersion, if $\nabla^{\perp}H=0$ and $H$ is non-null. In such case, $|H|$ is a positive constant.
\end{definition}
Suppose that $\Phi:\Sigma\rightarrow M^2(\epsilon)\times M^2(\epsilon)$ is a PMC-immersion of an orientable surface $\Sigma$.
We can define another parallel normal vector field $\tilde{H}$ as the only one with $|\tilde{H}|=|H|$ and $\{\tilde{H},H\}$ defining the same orientation on the normal bundle
 as $\{e_3,e_4\}$. Because $H$ is parallel, $K^{\perp}=0$ and hence the Ricci equation of $\Phi$ is given by
\[
|H|^2\bar{K}^{\perp}=\langle\,[A_H,A_{\tilde{H}}]e_1,e_2\rangle,
\]
where $A_\xi$ is the Weingarten endomorphism associated to a normal vector $\xi$.

In order to get examples of PMC-surfaces, we make use of the following trivial fact: {\it If $\Sigma$ is a constant mean curvature surface of a totally umbilical hypersurface with constant mean curvature of $M^2(\epsilon)\times M^2(\epsilon)$, then $\Sigma$ has parallel mean curvature vector as a surface of $M^2(\epsilon)\times M^2(\epsilon)$.} Next proposition describes the umbilical hypersurfaces with constant mean curvature of $M^2(\epsilon)\times M^2(\epsilon)$.
\begin{proposition}\label{prop:subvariedades-totalmente-umbilicales-CMC}
Let $\Psi:N\rightarrow M^2(\epsilon)\times M^2(\epsilon)$ be a totally umbilical hypersurface with constant mean curvature. Then $\Psi$ is totally geodesic and it is locally congruent to the totally geodesic immersion:
\begin{align*}
\epsilon &= 1 & \epsilon &= -1 \\
\s^2\times\r &\rightarrow \s^2\times\s^2 & \h^2\times\r &\rightarrow \h^2\times\h^2 \\
(p,t) &\mapsto (p,(\cos t,\sin t,0)) & (p,t) &\mapsto (p,(0,\sinh t,\cosh t)).
\end{align*}
\end{proposition}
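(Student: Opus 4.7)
My plan is to use the Codazzi equation. Let $\eta$ be a unit normal along $\Psi$ and write the shape operator as $A=\lambda I$ with $\lambda$ constant. Since $\nabla A\equiv 0$, Codazzi collapses to the statement that for every pair $X,Y$ of tangent vectors to $N$, $\bar R(X,Y)\eta$ has no tangential component --- equivalently, it is proportional to $\eta$.

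The first step is to exploit the product structure. Decomposing $X=(X_1,X_2)$, $Y=(Y_1,Y_2)$, $\eta=(\eta_1,\eta_2)$ along $TM^2(\epsilon)\oplus TM^2(\epsilon)$ and using that each factor has constant curvature $\epsilon$,
\[
\bar R(X,Y)\eta=\epsilon\bigl(\langle Y_1,\eta_1\rangle X_1-\langle X_1,\eta_1\rangle Y_1,\ \langle Y_2,\eta_2\rangle X_2-\langle X_2,\eta_2\rangle Y_2\bigr).
\]
Put $a=|\eta_1|$, $b=|\eta_2|$, $a^2+b^2=1$, and suppose for contradiction that $a,b>0$. Choosing a unit vector $\tilde e_1\perp\eta_1$ in $T_pM^2(\epsilon)$, the vectors $X=(\tilde e_1,0)$ and $Y=(b\eta_1/a,-a\eta_2/b)$ are both tangent to $N$, and direct substitution gives $\bar R(X,Y)\eta=(\epsilon ab\,\tilde e_1,0)$. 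For this to be proportional to $\eta=(\eta_1,\eta_2)$ with $\tilde e_1\perp\eta_1$, one needs $ab=0$, a contradiction. Hence $\eta$ lies entirely in a single factor.

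Assume $\eta_2\equiv 0$. Then the two-plane $\{0\}\oplus TM^2(\epsilon)$ is contained in $TN$ at every point, and being integrable on the ambient manifold with totally geodesic leaves $\{p\}\times M^2(\epsilon)$, it foliates $N$, so $N$ is locally of the form $\gamma\times M^2(\epsilon)$ for a curve $\gamma\subset M^2(\epsilon)$. For $X,Y$ tangent to a leaf, $\bar\nabla_XY$ stays tangent to the leaf and thus to $N$, so the second fundamental form vanishes on this 2-plane; umbilicity then forces $\lambda=0$ and $\Psi$ is totally geodesic. For $T$ tangent to $\gamma$, $\bar\nabla_TT$ belongs to $TN$ and to $TM^2(\epsilon)\oplus\{0\}$, so it is tangent to $\gamma$, proving $\gamma$ is a geodesic of $M^2(\epsilon)$. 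Transitivity of the isometry group of $M^2(\epsilon)$ on unit-speed geodesics then puts $\gamma$ in the explicit form displayed in the statement, completing the classification.

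The main obstacle is the algebraic extraction of $ab=0$: both factors' curvatures contribute simultaneously to $\bar R(X,Y)\eta$, and the test vectors must be chosen with some care to isolate the mixed term. Once the normal is known to sit in a single factor, the remainder is routine foliation bookkeeping.
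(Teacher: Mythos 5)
Your argument is correct. The first half coincides with the paper's: both start from the Codazzi equation, which for a totally umbilical hypersurface with constant mean curvature reduces to $\bar R(X,Y,Z,\eta)=0$ for all tangent $X,Y,Z$, and both use this to force the normal into a single factor; the paper merely asserts this step is "easy to get for suitable vectors," whereas you actually exhibit the test vectors $X=(\tilde e_1,0)$, $Y=(b\eta_1/a,-a\eta_2/b)$ and extract $ab=0$ — your computation checks out and is a welcome filling-in of that gap. The second half is where you genuinely diverge. The paper works extrinsically: using the normal frame $\{\Psi,\hat\Psi\}$ of $M^2(\epsilon)\times M^2(\epsilon)$ in $\r^6$ (or $\r^6_2$), it shows $\eta+\hat H\Psi$ is a constant vector $A=(A_1,A_2)$, deduces $\hat H=0$ because $\hat H\Psi_1=A_1$ would otherwise force $\Psi_1$ constant on a $3$-manifold, and then reads off $\langle\Psi_2,A_2\rangle=0$ to identify the geodesic. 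You instead argue intrinsically: once $\eta=(\eta_1,0)$, the totally geodesic leaves $\{p\}\times M^2(\epsilon)$ lie inside $N$, the second fundamental form vanishes on that $2$-plane, and umbilicity kills $\lambda$ outright; the transverse curve is then a geodesic because its curvature vector must lie in the one-dimensional intersection $TN\cap(TM^2(\epsilon)\oplus\{0\})$ while being orthogonal to the unit tangent. Your route avoids the ambient Euclidean embedding entirely and gets $\lambda=0$ more directly, at the cost of a little foliation bookkeeping; the paper's route produces the explicit parametrization of the geodesic as a by-product of the constant vector $A_2$. Two small points you leave implicit and should state: since $|\eta_1|^2$ is continuous and takes only the values $0$ and $1$, connectedness of $N$ is what lets you assume $\eta_2\equiv 0$ globally rather than pointwise; and your normalization puts the geodesic in the first factor, so you need the factor-swapping isometry $(x,y)\mapsto(y,x)$ to match the form displayed in the statement. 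Neither affects correctness.
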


\begin{proof}
Let $\eta$ be a unit normal vector field of $N$ in $M^2(\epsilon)\times M^2(\epsilon)$, $\hat{\sigma}$ the second fundamental form of $\Psi$ and $\hat{H}$ the mean curvature. As $\Psi$ is totally umbilical we have that
\[
\hat{\sigma}(v,w)=\hat{H}\langle v,w\rangle\eta,\quad\forall\, v,w\,\in TN.
\]
As $\hat{H}$ is constant, we obtain that
\[
(\nabla\hat{\sigma})(x,v,w)=0,\quad \forall\, x,v,w\,\in TN.
\]
Now the Codazzi equation of $\Psi$ says that
\[
\bar{R}(x,v,w,\eta)=0,\quad \forall\, x,v,w\,\in TN.
\]
Using this property for suitable vectors and that $\hbox{dim}\,N=3$, it is easy to get that one of the two components of $\eta$ vanishes, and so, up to an isometry of $M^2(\epsilon)\times M^2(\epsilon)$,  we can take $\eta=(0,\eta_2)$.

If $\Psi=(\Psi_1,\Psi_2)$, then $\Psi$ and $\hat{\Psi}=(\Psi_1,-\Psi_2)$ are an orthogonal reference in the normal bundle of $M^2(\epsilon)\times M^2(\epsilon)$ in $\r^6$ or $\r^6_2$. So for any $v\in TN$, taking into account that $\langle \hat{\Psi}_*(v),\eta\rangle=-\langle v,\eta\rangle=0$, we have that
\[
v\cdot\eta=-\hat{A}_{\eta}v=-\hat{H}v.
\]
So the map $\eta+\hat{H}\Psi:N\rightarrow\r^6$ is a constant $A=(A_1,A_2)\in\r^3\times\r^3\equiv\r^6$, and hence $\hat{H}\Psi_1=A_1$ and $\eta_2+\hat{H}\Psi_2=A_2$. As $N$ is a $3$-manifold and $\Psi$ an immersion, $\Psi_1$ cannot be a constant and so $\hat{H}=0$ which implies that $\Psi$ is totally geodesic. Now the second equation says that $\eta_2=A_2$ and so $\langle\Psi_2,A_2\rangle=\langle\Psi,\eta\rangle=0$ with $|A_2|=|\eta_2|=1$. This proves that $\Psi_2(N)$ is a geodesic of $\s^2$ or $\h^2$ and the proof finishes.
\end{proof}

  As a consequence of this result we obtain that
\begin{quote}
  {\it CMC-surfaces of $M^2(\epsilon)\times\r$ are surfaces of $M^2(\epsilon)\times M^2(\epsilon)$ with parallel mean curvature vector}.
\end{quote}
Other examples of PMC-surfaces of $M^2(\epsilon)\times M^2(\epsilon)$ can be constructed in the following way: given two regular curves $\alpha:I\rightarrow M^2(\epsilon)$ and $\beta:I'\rightarrow M^2(\epsilon)$ then
\[
\begin{split}
\Phi:I\times I'&\rightarrow M^2(\epsilon)\times M^2(\epsilon) \\
\Phi(t,s)&=(\alpha(t),\beta(s))
\end{split}
 \]
is an immersion of the surface $I\times I'$ whose mean curvature vector is given by
\[
H=\frac{k_\alpha}{2}(J\alpha',0)+\frac{k_\beta}{2}(0,J\dot{\beta}),
\]
where $'$ (respectively $\dot{\,}$\,) stands for the derivative with respect to $t$ (respectively $s$), $k_{\alpha}$ and $k_{\beta}$ are respectively the curvatures of $\alpha$ and $\beta$ and we have assumed that $|\alpha'|=|\dot{\beta}|=1$. So we obtain that {\it $\Phi$ has parallel mean curvature vector if and only if $\alpha$ and $\beta$ are curves of constant curvature}. In that case, $4|H|^2=k_{\alpha}^2+k_{\beta}^2$, and hence $\Phi$ is minimal if and only if $\alpha$ and $\beta$ are geodesics. It is interesting to remark that the induced metric on $I\times I'$ by $\Phi$ is flat.

 Taking into account the curves of constant curvature of $\s^2$ and $\h^2$ we have that
the above examples are, up to congruences, open subsets of the following family of complete and embedded PMC-surfaces:
\begin{example}\label{ex:producto-curvas}
When $\epsilon=1$, the {\it tori product of two geodesic circles}
 \[
 T_{a,\hat{a}}=\{(x,y)\in\s^2\times\s^2\,/\,x_3=a,\,y_3=\hat{a}\},\quad 0\leq a\leq \hat{a}<1,\,a^2+\hat{a}^2>0,
 \]
 whose mean curvature satisfy $4|H|^2=\frac{a^2}{1-a^2}+\frac{\hat{a}^2}{1-\hat{a}^2}.$

When $\epsilon=-1$, we obtain three topological families of examples
\begin{enumerate}[1.]
\item the {\it tori product of two geodesic circles}
\[
\hat{T}_{a,\hat{a}}=\{(x,y)\in\h^2\times\h^2\,/\,x_3=a,\,y_3=\hat{a}\},\quad 1<a\leq \hat{a},
\]
whose mean curvature satisfy $4|H|^2=\frac{a^2}{a^2-1}+\frac{\hat{a}^2}{\hat{a}^2-1}$ and $|H|^2>1/2$,

\item the {\it cylinders product of a geodesic circle and a hypercycle}
\[
C_{a,b}=\{(x,y)\in\h^2\times\h^2\,/\,x_3=a,\,y_1=b\},\quad b\geq 0,\,a>1,
\]
whose mean curvature satisfy $4|H|^2=\frac{a^2}{a^2-1}+\frac{b^2}{b^2+1}$ and $|H|^2>1/4$,

and the {\it cylinders product of a geodesic circle and a horocycle}
\[
\hat{C}_{a}=\{(x,y)\in\h^2\times\h^2\,/\,x_3=a,\,y_1-y_3=1\},\quad a>1,
\]
whose mean curvature satisfy $4|H|^2=\frac{2a^2-1}{a^2-1}$ and $|H|^2>1/2$.

\item and finally the {\it planes product of two hypercycles}
\[
P_{b,\hat{b}}=\{(x,y)\in\h^2\times\h^2\,/\,x_1=b,\,y_1=\hat{b}\},\quad b,\hat{b}\geq 0, \, b\hat{b} \neq 0
\]
whose mean curvature satisfy $4|H|^2=\frac{b^2}{b^2+1}+\frac{\hat{b}^2}{\hat{b}^2+1}$ and $|H|^2<1/2$,

the {\it planes product of a hypercycle and a horocycle}
\[
\hat{P}_{b}=\{(x,y)\in\h^2\times\h^2\,|\,x_1=b,\,y_1-y_3=1\},\quad b\geq 0,
\]
whose mean curvature satisfy $4|H|^2=\frac{2b^2+1}{b^2+1}$ and $1/4\leq|H|^2<1/2$,

 and the {\it plane product of two horocycles}
 \[
 \tilde{P}=\{(x,y)\in\h^2\times\h^2\,|\,x_1-x_3=1,\,y_1-y_3=1\},
 \]
 whose mean curvature satisfies $|H|^2=1/2$.

\end{enumerate}
\end{example}

\section{Hopf differentials.}
In order to have a deep understanding of the geometry of $M^2(\epsilon)\times M^2(\epsilon)$ and of
its surfaces we need to introduce the two K\"ahler structures that $M^2(\epsilon)\times M^2(\epsilon)$ has. We can define two complex structures on $M^2(\epsilon)\times M^2(\epsilon) $ by
\[
J_1=(J,J),\quad J_2=(J,-J),
\]
whose K\"ahler two-forms are $\omega_1=\pi_1^*\omega+\pi_2^*\omega$ and $\omega_2=\pi_1^*\omega-\pi_2^*\omega$. Hence
\[
\omega_1\wedge\omega_1=-\omega_2\wedge\omega_2=2(\pi_1^*\omega\wedge\pi_2^*\omega),
\]
and so $J_1$ defines the chosen orientation on $M^2(\epsilon)\times M^2(\epsilon)$ and $J_2$ the opposite one.

 Now, $(M^2(\epsilon)\times M^2(\epsilon),\langle,\rangle,J_j)$, $j=1,2$ are K\"ahler-Einstein manifolds. It is clear that if $\mathrm{Id}:M^2(\epsilon)\rightarrow M^2(\epsilon)$ is the identity map and $F:M^2(\epsilon)\rightarrow M^2(\epsilon)$ is an anti-holomorphic isometry, then
\[
(\mathrm{Id},F):M^2(\epsilon)\times M^2(\epsilon)\rightarrow M^2(\epsilon)\times M^2(\epsilon)
\]
is a holomorphic isometry from $(M^2(\epsilon)\times M^2(\epsilon),\langle,\rangle,J_1)$ onto $(M^2(\epsilon)\times M^2(\epsilon),\langle,\rangle,J_2)$.

If $\Phi=(\phi,\psi):\Sigma\rightarrow M^2(\epsilon)\times M^2(\epsilon)$ is a PMC-immersion of an orientable surface $\Sigma$, then the {\it K\"ahler functions} on $\Sigma$, $C_1,C_2:\Sigma\rightarrow\r$, associated to the complex structures $J_1$ and $J_2$ are defined by
\[
\Phi^*\omega_j=C_j\omega_{\Sigma},\quad j=1,2,
\]
where $\omega_{\Sigma}$ is the area $2$-form of $\Sigma$. It is clear that $C_j^2\leq1$ and that the points where $C_j^2=1$ are the complex points of $\Phi$ with respect to the $J_j$ complex structure. It is interesting to remark that $C_j^2$ is well defined even when the surface is not orientable.

Now it is easy to check that the Jacobians of $\phi$ and $\psi$ are given by
\[
\hbox{Jac}\,(\phi)=\frac{C_1+C_2}{2},\quad\hbox{Jac}\,(\psi)=\frac{C_1-C_2}{2},
\]
and that the {\it extrinsic curvature} $\bar{K}=\bar{R}(e_1,e_2,e_2,e_1)$, where $\{e_1,e_2\}$ is an orthonormal frame on $T\Sigma$, and the normal extrinsic curvature are given by
\[
\bar{K}=\epsilon\frac{C_1^2+C_2^2}{2},\quad\quad \bar{K}^{\perp}=\epsilon\frac{C_1^2-C_2^2}{2}.
\]

We consider a local isothermal parameter $z=x+iy$ on $\Sigma$, such that
\[
\langle\Phi_z,\Phi_z\rangle=\langle\phi_z,\phi_z\rangle+\langle\psi_z,\psi_z\rangle=0,\quad
|\Phi_z|^2=|\phi_z|^2+|\psi_z|^2=e^{2u}/2,
\]
where the derivatives with respect to $z$ and $\bar{z}$ are given by
\[
\partial_z=\frac{1}{2}(\frac{\partial}{\partial x}-i\frac{\partial}{\partial y}),\quad \partial_{\bar{z}}=\frac{1}{2}(\frac{\partial}{\partial x}+i\frac{\partial}{\partial y}).
\]
We define two Hopf differentials in the following way:
\begin{eqnarray*}
\Theta_1(z)=\left(2\langle \sigma(\partial_z,\partial_z),H+i\tilde{H}\rangle+\frac{\epsilon}{4|H|^2}\langle J_1\Phi_z,H+i\tilde{H}\rangle^2\right)(dz)^2\\
\Theta_2(z)=\left(2\langle \sigma(\partial_z,\partial_z),H-i\tilde{H}\rangle+\frac{\epsilon}{4|H|^2}\langle J_2\Phi_z,H-i\tilde{H}\rangle^2\right)(dz)^2,
\end{eqnarray*}
where $\sigma$ is the second fundamental form of $\Phi$.

To prove that these Hopf differentials are holomorphic when the surface has parallel mean curvature vector, we need to study the Frenet equations of our immersion $\Phi:\Sigma\rightarrow M^2(\epsilon)\times M^2(\epsilon)\subset\r^6(\hbox{or}\, \r^6_2)$.

With the chosen orientation, $\{\Phi_x,\Phi_y,\tilde{H},H\}$ is an oriented frame on $\Phi^*T(M^2(\epsilon)\times M^2(\epsilon))$.
Denoting
\[
\xi=\frac{1}{\sqrt{2}|H|}(H-i\tilde{H}),
\]
we have that $|\xi|^2=1, \langle\xi,\xi\rangle=0$, $\nabla^{\perp}\xi=0$ and  $\{\xi,\bar{\xi}\}$ is a reference of the complexified normal bundle. Using known arguments in theory of surfaces in K\"ahler surfaces (see for instance~\cite{EGT}) and taking into account the chosen orientations it is easy to prove that
\begin{eqnarray}
J_1\Phi_z=iC_1\Phi_z+\gamma_1\xi,\quad\quad J_1\xi=-2e^{-2u}\bar{\gamma}_1\Phi_z-iC_1\xi,\label{eq:J1Phi-xi}\\
J_2\Phi_z=iC_2\Phi_z+\gamma_2\bar{\xi},\quad\quad J_2\xi=-2e^{-2u}\gamma_2\Phi_{\bar{z}}+iC_2\xi, \label{eq:J2Phi-xi}
\end{eqnarray}
for certain complex functions $\gamma_j,\,j=1,2$ which satisfy $|\gamma_j|^2=\frac{e^{2u}(1-C_j^2)}{2}$.

As $\{\Phi,\hat{\Phi}:=(\phi,-\psi)\}$ is an orthogonal reference along $\Phi$ of the normal bundle of $M^2(\epsilon)\times M^2(\epsilon)$ in $\r^6$ when $\epsilon=1$ and in $\r^6_2$ when $\epsilon=-1$, we easily get that the Frenet equations of the PMC-immersion $\Phi$ are given by
\begin{align*}
\Phi_{zz}&=2u_z\Phi_z+f_1\xi+f_2\bar{\xi}-\epsilon\frac{\gamma_1\gamma_2}{2}\hat{\Phi},\\
\Phi_{z\bar{z}}&=\frac{e^{2u}}{2}H-\epsilon\frac{e^{2u}}{4}\Phi-\epsilon\frac{e^{2u}}{4}C_1C_2\hat{\Phi},\\
\xi_z&=-\frac{|H|}{\sqrt{2}}\Phi_z-2e^{-2u}f_2\Phi_{\bar{z}}+\epsilon\frac{iC_1\gamma_2}{2}\hat{\Phi},\\
\bar{\xi}_z&=-\frac{|H|}{\sqrt{2}}\Phi_z-2e^{-2u}f_1\Phi_{\bar{z}}+\epsilon\frac{iC_2\gamma_1}{2}\hat{\Phi},
\end{align*}
for certain complex functions $f_j,\,j=1,2$.

Now, after a long and straightforward computation and taking into account the definitions of $C_j$ and $\gamma_j$, $j=1,2$ given in \eqref{eq:J1Phi-xi} and \eqref{eq:J2Phi-xi}, the integrability conditions of the above Frenet equations ($\Phi_{zz\bar{z}} = \Phi_{z\bar{z}z}$ and $\xi_{z \bar{z}} = \xi_{\bar{z}z}$) are given by
\begin{equation}\label{eq:Gauss-Ricci}
\left\{
\begin{array}{lr}
4u_{z\bar{z}}+e^{2u}(|H|^2+\epsilon\frac{C_1^2+C_2^2}{2})-4e^{-2u}(|f_1|^2+|f_2|^2)=0,
&\hbox{Gauss} \\
\epsilon e^{4u}(C_1^2-C_2^2) - 8(|f_1|^2-|f_2|^2) = 0,&\hbox{Ricci} \\
(\gamma_j)_z=2u_z\gamma_j-2iC_jf_j,& j=1,2,
\end{array}
\right.
\end{equation}
\begin{equation}
\left\{
\begin{aligned}
(C_j)_z&=2ie^{-2u}f_j\bar{\gamma}_j-i\frac{|H|}{\sqrt{2}}\gamma_j,&
(f_j)_{\bar{z}}&=i\epsilon\frac{e^{2u}C_j\gamma_j}{4},\\
(\gamma_j)_{\bar{z}}&=-\frac{i|H|C_je^{2u}}{\sqrt{2}},&
|\gamma_j|^2&=\frac{e^{2u}(1-C_j^2)}{2},
\end{aligned}
\right. \, j=1,2.
\label{eq:compatibilidad-PMC}
\end{equation}
 But the third equation in~\eqref{eq:Gauss-Ricci} can be easily deduced from the equations in~\eqref{eq:compatibilidad-PMC} and from that equation and using again~\eqref{eq:compatibilidad-PMC} we obtain Gauss and Ricci equations. So, really,  {\it the integrability conditions of the above Frenet system are~\eqref{eq:compatibilidad-PMC}}.

\begin{proposition}\label{prop:diferenciales-holomorfas}
Let $\Phi:\Sigma\rightarrow M^2(\epsilon)\times M^2(\epsilon)$ be a PMC-immersion of an orientable surface $\Sigma$. Then $\Theta_j$, $j=1,2,$ are holomorphic.
\end{proposition}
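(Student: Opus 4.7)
The plan is to evaluate $\Theta_1$ and $\Theta_2$ explicitly in terms of the coefficients $f_j$ and $\gamma_j$ that appear in the Frenet equations, and then verify holomorphicity by a direct computation using the integrability system~\eqref{eq:compatibilidad-PMC}.

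First I would translate everything into the frame $\{\xi,\bar\xi\}$ of the complexified normal bundle. Since $\xi=(H-i\tilde H)/(\sqrt{2}|H|)$ and $\bar\xi=(H+i\tilde H)/(\sqrt{2}|H|)$ satisfy $\langle\xi,\xi\rangle=0$ and $\langle\xi,\bar\xi\rangle=1$, the normal part of $\Phi_{zz}$ in the Frenet system gives
\[
\sigma(\partial_z,\partial_z)=f_1\xi+f_2\bar\xi,
\]
and therefore
\[
\langle\sigma(\partial_z,\partial_z),H+i\tilde H\rangle=\sqrt{2}|H|\,f_1,\qquad
\langle\sigma(\partial_z,\partial_z),H-i\tilde H\rangle=\sqrt{2}|H|\,f_2.
\]
Using the expressions \eqref{eq:J1Phi-xi}--\eqref{eq:J2Phi-xi} and that $\Phi_z$ is orthogonal to $\xi,\bar\xi$, I similarly obtain
\[
\langle J_1\Phi_z,H+i\tilde H\rangle=\sqrt{2}|H|\,\gamma_1,\qquad
\langle J_2\Phi_z,H-i\tilde H\rangle=\sqrt{2}|H|\,\gamma_2.
\]

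Substituting these into the definitions collapses the Hopf differentials to the compact form
\[
\Theta_j=\Bigl(2\sqrt{2}\,|H|\,f_j+\tfrac{\epsilon}{2}\,\gamma_j^{2}\Bigr)(dz)^{2},\qquad j=1,2.
\]
Holomorphicity then amounts to showing that $2\sqrt{2}|H|(f_j)_{\bar z}+\epsilon\gamma_j(\gamma_j)_{\bar z}=0$. This is precisely what the two relevant equations of~\eqref{eq:compatibilidad-PMC},
\[
(f_j)_{\bar z}=i\epsilon\,\frac{e^{2u}C_j\gamma_j}{4},\qquad
(\gamma_j)_{\bar z}=-i\,\frac{|H|\,C_j\,e^{2u}}{\sqrt{2}},
\]
yield after cancellation, since $\frac{2\sqrt{2}}{4}=\frac{1}{\sqrt{2}}$.

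The main conceptual step is the rewriting of $\Theta_j$ in the $\xi$-frame; once this is done, the verification is a one-line cancellation against the $\bar z$-derivatives supplied by the integrability conditions. The only subtlety I would watch for is keeping track of which complex structure $J_1$ or $J_2$ produces the coefficient $\gamma_1$ versus $\gamma_2$ (governed by whether the image of $\Phi_z$ lies along $\xi$ or $\bar\xi$), because a sign error there would prevent the two $\bar z$-derivatives from cancelling. No further use of the Gauss or Ricci equations is needed.
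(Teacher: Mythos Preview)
Your proposal is correct and follows exactly the same route as the paper: rewrite $\Theta_j$ in the $\xi$-frame as $\bigl(2\sqrt{2}|H|f_j+\tfrac{\epsilon}{2}\gamma_j^2\bigr)(dz)^2$, then check that the $\bar z$-derivative vanishes using the two equations for $(f_j)_{\bar z}$ and $(\gamma_j)_{\bar z}$ from~\eqref{eq:compatibilidad-PMC}. Your write-up is in fact more detailed than the paper's, which simply states the compact form and invokes the integrability conditions.
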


\begin{proof}
Using the functions defined above, the Hopf differentials $\Theta_j,\,j=1,2,$ can be written as
\[
\Theta_j=\left(2\sqrt{2}|H|f_j+\frac{\epsilon}{2}\gamma_j^2\right)\,(dz)^2,\quad j=1,2.
\]
Now, from the integrability conditions we obtain that $\left(4\sqrt{2}|H|f_j+\epsilon\gamma_j^2\right)_{\bar z}=0$, which proves the Proposition.
\end{proof}

From \eqref{eq:J1Phi-xi} and \eqref{eq:J2Phi-xi} we have that $\langle J_1\Phi_z,\bar{\xi}\rangle=\langle J_2\Phi_z,\xi\rangle=0$, and then the Hopf differentials can also be written as
\begin{eqnarray*}
\Theta_1(z)=\left(2\langle \sigma(\partial_z,\partial_z),H+i\tilde{H}\rangle-\frac{\epsilon}{|H|^2}\langle J_1\Phi_z,\tilde{H}\rangle^2\right)(dz)^2,\\
\Theta_2(z)=\left(2\langle \sigma(\partial_z,\partial_z),H-i\tilde{H}\rangle-\frac{\epsilon}{|H|^2}\langle J_2\Phi_z,\tilde{H}\rangle^2\right)(dz)^2.
\end{eqnarray*}

In the following result we compute these Hopf-differentials in the examples described in section 2.
\begin{lemma}~ \label{lm:diferencial-AR}
\begin{enumerate}[1.]
  \item Let $\Phi:\Sigma\rightarrow M^2(\epsilon)\times\r\hookrightarrow M^2(\epsilon)\times M^2(\epsilon)$ be a CMC-immersion. Then $\Theta_1=\Theta_2=2\Theta_{AR}$, where $\Theta_{AR}$ is the Abresh-Rosenberg holomorphic differential associated to $\Phi$ (see \cite{AR}).
  \item Let $\Phi:I\times I'\rightarrow M^2(\epsilon)\times M^2(\epsilon)$ be the product of two curves $\Phi(t,s)=(\alpha(t),\beta(s))$ of constant curvatures $k_{\alpha}$ and $k_{\beta}$ respectively. Then
    \[
    \Theta_j=
    \frac{\epsilon+4|H|^2}{16|H|^2}(k_{\alpha}+(-1)^ji\,k_{\beta})^2\,(dz)^2,\quad j=1,2.
    \]
\end{enumerate}
\end{lemma}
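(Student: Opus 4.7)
The plan is to reduce both parts to the simplified expression
$\Theta_j=\bigl(2\sqrt{2}|H|\,f_j+\tfrac{\epsilon}{2}\gamma_j^{2}\bigr)(dz)^{2}$
obtained in the proof of Proposition~\ref{prop:diferenciales-holomorfas}, and to compute $C_j$, $\gamma_j$, $f_j$ explicitly in each case.

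\textbf{Part~(1).} Write $\Phi=(\phi,\delta\circ t)$, where $\delta$ is a unit-speed geodesic of $M^{2}(\epsilon)$ parametrising the $\r$-factor and $t\colon\Sigma\to\r$. Since $\delta$ has $1$-dimensional image, $(\delta\circ t)^{*}\omega=0$, so $\Phi^{*}\omega_1=\Phi^{*}\omega_2=\phi^{*}\omega$ and hence $C_1=C_2=\mathrm{Jac}\,\phi$. The hypersurface $M^{2}(\epsilon)\times\r$ is totally geodesic in $M^{2}(\epsilon)\times M^{2}(\epsilon)$, so its unit normal $\eta$ is a constant vector of the ambient $\r^{6}$ or $\r^{6}_{2}$ and is parallel along $\Sigma$; combined with $\nabla^{\perp}\tilde H=0$ and $\tilde H\perp H$ this forces $\tilde H=\pm|H|\eta$. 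Total geodesicity also gives $\sigma(\partial_z,\partial_z)\in T(M^{2}(\epsilon)\times\r)$, so $\langle\sigma(\partial_z,\partial_z),\tilde H\rangle=0$. A short computation yields $\langle J_j\Phi_z,\tilde H\rangle=\pm|H|\langle\Phi_z,T\rangle$ for $j=1,2$ (with $T$ the unit vertical vector), since $J_1,J_2$ differ on the second factor only by a sign and the square removes this. Substituting into the alternative form of $\Theta_j$ written just before the lemma, both differentials reduce to $\bigl(2\langle\sigma(\partial_z,\partial_z),H\rangle-\epsilon\langle\Phi_z,T\rangle^{2}\bigr)(dz)^{2}$, which equals $2\Theta_{AR}$ with the normalisation of~\cite{AR}.

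\textbf{Part~(2).} Take the isothermal parameter $z=t+is$; since $|\alpha'|=|\dot\beta|=1$ the induced metric is $dt^{2}+ds^{2}$ and $e^{2u}=1$. Because each factor has $1$-dimensional image, $\Phi^{*}\omega_j=0$ and $C_1=C_2=0$. In the orthonormal normal frame $\{(J\alpha',0),(0,J\dot\beta)\}$ the paper's orientation convention forces $\tilde H=\pm\bigl(\tfrac{k_\beta}{2}(J\alpha',0)-\tfrac{k_\alpha}{2}(0,J\dot\beta)\bigr)$; a direct factorisation then gives $H-i\tilde H=\tfrac{k_\alpha+ik_\beta}{2}\bigl((J\alpha',0)-i(0,J\dot\beta)\bigr)$ and the analogous expression for $H+i\tilde H$. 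Comparing with $J_j\Phi_z=\tfrac{1}{2}\bigl((J\alpha',0)+(-1)^{j}i(0,J\dot\beta)\bigr)$ via \eqref{eq:J1Phi-xi}--\eqref{eq:J2Phi-xi} (recalling $C_j=0$) yields $\gamma_j=\tfrac{k_\alpha+(-1)^{j}ik_\beta}{2\sqrt{2}\,|H|}$. For $f_j$, use $\alpha''=k_\alpha J\alpha'-\epsilon\alpha$ and $\ddot\beta=k_\beta J\dot\beta-\epsilon\beta$ to compute $\Phi_{zz}=\tfrac{k_\alpha}{4}(J\alpha',0)-\tfrac{k_\beta}{4}(0,J\dot\beta)-\tfrac{\epsilon}{4}\hat\Phi$; rewriting the first two terms in the $\{\xi,\bar\xi\}$ basis and matching the Frenet equation produces $f_j=\tfrac{(k_\alpha+(-1)^{j}ik_\beta)^{2}}{8\sqrt{2}\,|H|}$. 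Substituting into the simplified formula for $\Theta_j$ and factoring yields $\tfrac{\epsilon+4|H|^{2}}{16|H|^{2}}(k_\alpha+(-1)^{j}ik_\beta)^{2}(dz)^{2}$, as claimed.

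The main obstacle I foresee is the sign/orientation bookkeeping: the definition of $\tilde H$ depends on an orientation convention compatible with $\pi_1^{*}\omega\wedge\pi_2^{*}\omega$, and a global sign change in $\tilde H$ flips both $\gamma_j$ and $f_j$. Since $\Theta_j$ sees these quantities only through $\gamma_j^{2}$ and the product $\sqrt{2}|H|f_j$, the ambiguity is invisible in the final answer, but it must be handled consistently. Once the conventions are fixed, the remaining work is routine algebra with the Frenet frame.
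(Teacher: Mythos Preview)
Your proof is correct and follows essentially the same route as the paper. For Part~(1) you reproduce the paper's argument almost verbatim: identify $\tilde H$ with $|H|$ times the unit normal of the totally geodesic slice, use total geodesicity to kill $\langle\sigma(\partial_z,\partial_z),\tilde H\rangle$, and observe that $\langle J_1\Phi_z,\tilde H\rangle=\langle J_2\Phi_z,\tilde H\rangle=|H|\eta_z$ so that both $\Theta_j$ collapse to the Abresch--Rosenberg expression. For Part~(2) the paper simply declares the computation ``trivial''; your explicit determination of $\gamma_j$ and $f_j$ in the $\{\xi,\bar\xi\}$ frame and the substitution into $\Theta_j=(2\sqrt2|H|f_j+\tfrac{\epsilon}{2}\gamma_j^2)(dz)^2$ is exactly the calculation the paper omits, and it is carried out correctly.

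One minor remark on your closing paragraph: flipping the sign of $\tilde H$ does not literally leave $\Theta_j$ invariant (since $f_j$ enters linearly); rather it swaps $\xi\leftrightarrow\bar\xi$, hence $f_1\leftrightarrow f_2$ and $\gamma_1\leftrightarrow\gamma_2$, so the pair $\{\Theta_1,\Theta_2\}$ is preserved while the labelling is exchanged. This does not affect your argument, but the sentence ``the ambiguity is invisible in the final answer'' overstates the situation slightly.
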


\begin{proof}
First we prove (1). It is clear that, in this case, $\eta=(0,(0,0,1))$ (respectively $\eta=(0,(1,0,0))$) when $\epsilon=1$ (respectively $\epsilon=-1$) is a unit normal field to the totally geodesic immersion $M^2(\epsilon)\times\r\hookrightarrow M^2(\epsilon)\times M^2(\epsilon)$ given in Proposition~\ref{prop:subvariedades-totalmente-umbilicales-CMC}. So $\tilde{H}=|H|\eta$. If $\tilde{\sigma}$ is the second fundamental form of $\Sigma$ in $M^2(\epsilon)\times\r$, then $\tilde{\sigma}=\sigma$ and then
\[
\langle \sigma(\partial_z,\partial_z),H+i\tilde{H}\rangle=\langle \sigma(\partial_z,\partial_z),H-i\tilde{H}\rangle=\langle \tilde{\sigma}(\partial_z,\partial_z),H\rangle.
\]
Also, if $\Phi=(\phi,\eta)$, from a direct computation we have that $\langle  J_1\Phi_z,\tilde{H}\rangle=|H|\eta_z$ and $\langle  J_2\Phi_z,\tilde{H}\rangle=|H|\eta_z$.

Finally, from the second expressions of $\Theta_j$, we get that
\[
\Theta_1=\Theta_2=(2\langle\tilde{\sigma}(\partial_z,\partial_z),H\rangle-\epsilon(\eta_z)^2)\,(dz)^2=2\Theta_{AR}.
\]
We remark that, in this case, the functions appearing in the Frenet equations are given by $f_1=f_2$, $\gamma_1=\gamma_2$ and so $C_1=C_2$.

The proof of (2) is trivial.
\end{proof}

From the integrability conditions we can get some properties and formulae about PMC-surfaces which will be used in the next sections.

\begin{enumerate}[$\bullet$]
    \item First, from the Gauss and Ricci equations joint with $4u_{z\bar{z}}=-Ke^{2u}$ it is easy to deduce that
        \begin{equation}
        |f_j|^2=\frac{e^{4u}}{8}(|H|^2-K+\epsilon C_j^2),\quad j=1,2.
        \label{eq:modulo-fj}
        \end{equation}
        These equations say that
         $$K\leq|H|^2+1,\quad \hbox{when}\,\,\epsilon=1$$
        and the equality is attained in a point $p$ if and only if for some $j\in\{1,2\}$ $f_j(p)=0$ and $C_j^2(p)=1$. Also
         $$K\leq|H|^2\quad\hbox{when}\,\,\epsilon=-1$$
        and the equality is attained in a point $p$ if and only if for some $j\in\{1,2\}$ $f_j(p)=0$ and $C_j(p)=0$.

    \item Second, using \eqref{eq:modulo-fj} and the integrability conditions, we obtain the following relation between $|\Theta_j|^2$ and $|\nabla C_j|^2$
        \begin{equation}
        \begin{split}
        &|\nabla C_j|^2+4\epsilon e^{-4u}|\Theta_j|^2 = \\
        &=(1-C_j^2+4\epsilon|H|^2)\left(\frac{\epsilon(1-C_j^2)}{4}+|H|^2+\epsilon C_j^2-K\right), \quad j =1,2.
        \end{split}
        \label{eq:gradiente-C-general}
        \end{equation}
    \item Also, from the integrability conditions, it is easy to compute the Laplacian of the K\"ahler functions $C_j$, obtaining
        \begin{equation}
        \Delta C_j=-C_j\left(4|H|^2-2K+\epsilon(1+C_j^2)\right),\quad j=1,2.
        \label{eq:laplaciano-C}
        \end{equation}
        This means that $C_j$ satisfies the equation $(\Delta+F)C_j=0$ where $F= 4|H|^2-2K+\epsilon(1+C_j^2)$. Then, using the Nodal Courant Theorem, we have that either $C_j=0$ or the set $\{p\in\Sigma\,|\,C_j(p)=0\}$ is a union of curves. In particular its interior is empty.

    \item To finish this section, it is interesting to remark that $\{p\in\Sigma\,/\/ C_j^2(p)=1\},\,i=1,2$ has empty interior, because if not, its interior is a non-empty complex surface and so it is minimal, contradicting that $|H|$ is a positive constant on $\Sigma$. But, under certain restrictions on the curvature of the surface, we can get more properties of these sets.
\end{enumerate}

\begin{proposition}\label{prop:C-tiene-puntos-aislados}
Let $\Phi:\Sigma\rightarrow M^2(\epsilon)\times M^2(\epsilon)$ be a PMC-immersion. If  $K(p)\not=\epsilon$, for any $p\in\Sigma$, then $\{p\in\Sigma\,|\,C_j^2(p)=1\}$, $j=1,2$ are sets of isolated points.

\end{proposition}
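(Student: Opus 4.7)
The plan is to show that every point $p_0\in\Sigma$ with $C_j(p_0)^2=1$ is a critical point of $C_j$ at which the Hessian is definite, hence a strict local extremum, and therefore isolated in $\{C_j^2=1\}$. So fix such a $p_0$ and choose an isothermal coordinate $z$ centred at it. The last identity in~\eqref{eq:compatibilidad-PMC} forces $\gamma_j(p_0)=0$, and then the first equation of~\eqref{eq:compatibilidad-PMC} gives $(C_j)_z(p_0)=0$, so $p_0$ is critical and the Riemannian Hessian of $C_j$ at $p_0$ coincides with its matrix of Euclidean second partials.

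Next I would compute the two relevant complex second derivatives at $p_0$. From~\eqref{eq:laplaciano-C} and $\Delta=4e^{-2u}\partial_z\partial_{\bar z}$ one immediately gets
\[
(C_j)_{z\bar z}(p_0)=-\frac{C_j(p_0)\,e^{2u(p_0)}}{2}\bigl(2|H|^2-K+\epsilon\bigr).
\]
The holomorphic second derivative comes from differentiating the first equation of~\eqref{eq:compatibilidad-PMC} once more in $z$, evaluating at $p_0$, and using $(\gamma_j)_z(p_0)=-2iC_j(p_0)f_j(p_0)$ together with $(\bar\gamma_j)_z(p_0)=\overline{(\gamma_j)_{\bar z}(p_0)}=i|H|C_j(p_0)e^{2u(p_0)}/\sqrt{2}$; the two contributions turn out to add rather than cancel, yielding $(C_j)_{zz}(p_0)=-2\sqrt{2}|H|C_j(p_0)f_j(p_0)$.

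The determinant of the Euclidean Hessian is $4[(C_j)_{z\bar z}]^2-4|(C_j)_{zz}|^2$. Substituting the two expressions above together with $|f_j(p_0)|^2=e^{4u(p_0)}(|H|^2-K+\epsilon)/8$ from~\eqref{eq:modulo-fj} and invoking the algebraic identity
\[
(2|H|^2-K+\epsilon)^2-4|H|^2(|H|^2-K+\epsilon)=(K-\epsilon)^2
\]
collapses the whole expression to $e^{4u(p_0)}(K(p_0)-\epsilon)^2$. Spotting this clean collapse is the delicate step; everything else is bookkeeping.

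Finally, the trace of the Hessian equals $-C_j(p_0)e^{2u(p_0)}(4|H|^2-2K+2\epsilon)$, whose absolute value is at least $2|H|^2 e^{2u(p_0)}>0$ because $|f_j(p_0)|^2\geq 0$ forces $K(p_0)\leq|H|^2+\epsilon$ at any complex point. Thus under the hypothesis $K(p_0)\neq\epsilon$ the determinant is strictly positive and the trace is nonzero, so the Hessian is definite at $p_0$. Hence $C_j$ has a strict local extremum there (a maximum if $C_j(p_0)=1$, a minimum if $C_j(p_0)=-1$), and so $p_0$ is isolated in $\{C_j^2=1\}$.
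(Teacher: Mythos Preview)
Your proof is correct and follows essentially the same approach as the paper: compute the Hessian of $C_j$ at a point with $C_j^2=1$ and show it is nondegenerate precisely when $K\neq\epsilon$. Your computation is slightly more streamlined than the paper's, which splits into the cases $\Theta_j(p_0)=0$ and $\Theta_j(p_0)\neq0$ (normalizing $\Theta_j$ to a constant in the latter); you instead carry $f_j(p_0)$ through directly and collapse the determinant via the identity $(2|H|^2-K+\epsilon)^2-4|H|^2(|H|^2-K+\epsilon)=(K-\epsilon)^2$, arriving at the same conclusion in one stroke. One harmless redundancy: once you have $\det>0$ for a real symmetric $2\times2$ matrix, the Hessian is already definite, so the separate trace argument is unnecessary (though correct).
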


\begin{proof}
As the points $p$ with $C_j^2(p)=1$ are critical points of the function $C_j$, we are going to study the degeneracy of these points.

  Let $p_0$ be a point with $C_j(p_0)^2=1$, with $j\in\{1,2\}$. Then $\gamma_j(p_0)=0$ and from the integrability conditions~\eqref{eq:compatibilidad-PMC} one gets that
\begin{align*}
(C_j)_{zz}(p_0)&=-2\sqrt{2}|H|f(p_0)C(p_0),\\ (C_j)_{z\bar{z}}(p_0)&=-\frac{C_j(p_0)e^{2u(p_0)}|H|^2}{2}-4C_j(p_0)e^{-2u(p_0)}|f_j|^2(p_0).
\end{align*}
If $\Theta_j(p_0)=0$, then $f_j(p_0)=0$ and so the determinant of the Hessian of $C_j$ at $p_0$ is given by $|H|^4e^{4u(p_0)}$ which is not zero. Therefore, in this case, $p_0$ is a non-degenerate critical point.

If $\Theta_j(p_0)\not=0$, in a neighborhood of $p_0$ we can normalize $\Theta_j=\lambda\in\c^*$. In particular $\lambda=\Theta_j(p_0)=2\sqrt{2}|H|f_j(p_0)$. In this case we get
that the determinant of the Hessian of $C_j$ at $p_0$ is
\[
4\left(\frac{|\lambda|^2e^{-2u(p_0)}}{|H|^2}-|H|^2e^{2u(p_0)}\right)^2.
\]
So the critical point $p_0$ is degenerate if and only if $|\lambda|^2=|H|^4e^{4u(p_0)}$. But from \eqref{eq:modulo-fj} we obtain
\[
|\lambda|^2=|H|^2e^{4u(p_0)}(|H|^2+\epsilon- K(p_0)),
\]
and hence $p_0$ is degenerate if and only if $K(p_0)=\epsilon$. As the non-degenerate critical points are isolated, we finish the proof.
\end{proof}

\section{Main Results}
The integrability equations given in the previous section allow to relate, at least in the simply connected case, PMC-immersions in $M^2(\epsilon)\times M^2(\epsilon)$ with pairs of CMC-immersions in $M^2(\epsilon) \times \r$ with the same induced metric and the same length of the mean curvature. We concrete this relation in the following result.

\begin{theorem}\label{tm:relacion-PMC-CMC}
Given a simply-connected Riemannian surface $(\Sigma, g)$, there exists a 1:1 correspondence $[\Phi] \leftrightarrow ([\Phi_1], [\Phi_2])$, between congruent classes of PMC-isometric immersions $\Phi:(\Sigma, g)\rightarrow M^2(\epsilon) \times M^2(\epsilon)$ and pairs of congruent classes of CMC-isometric immersions $\Phi_1,\Phi_2:(\Sigma, g) \rightarrow M^2(\epsilon) \times \r$ with $|H| = |H_1| = |H_2|$, where $H$ is the mean curvature vector of $\Phi$ and $H_j$, $j = 1, 2$ are respectively the mean curvatures of $\Phi_j$, $j = 1, 2$. The Abresh-Rosenberg differentials $\Theta^j_{AR}$ associated to the pair of CMC-immersions $\Phi_j$, $j = 1, 2$, and the two Hopf differentials $\Theta_j$, $j = 1, 2$ associated to the PMC-immersion $\Phi$ are related by $2\Theta^j_{AR} = \Theta_j$, $j = 1, 2$.

Moreover, $[\Phi_1]=[\Phi_2]$ if and only if $\Phi$ factorizes
\[
\Phi:\Sigma\rightarrow M^2(\epsilon)\times\r\hookrightarrow M^2(\epsilon)\times M^2(\epsilon)
\]
through a CMC immersion in $M^2(\epsilon) \times \r$.
\end{theorem}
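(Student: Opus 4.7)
The plan is to exploit the fact that the integrability conditions \eqref{eq:compatibilidad-PMC} split into two independent sub-systems indexed by $j=1,2$, sharing only the conformal factor $e^{2u}$ (which encodes the metric $g$) and the positive constant $|H|$. The key observation is that each sub-system is precisely the integrability system for a CMC-isometric immersion of $(\Sigma,g)$ into $M^2(\epsilon)\times\r$ with constant mean curvature $|H|$; this can be checked by redoing the Frenet computation of Section 3 for an immersion factoring through the totally geodesic $M^2(\epsilon)\times\r$ (the $j=1$ and $j=2$ data of such a factored immersion coincide and match precisely the classical data of Daniel/Abresh--Rosenberg, cf.\ Lemma~\ref{lm:diferencial-AR}(1)).

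Starting from a PMC-immersion $\Phi$ of $(\Sigma,g)$ into $M^2(\epsilon)\times M^2(\epsilon)$, extract the data $(u,C_j,\gamma_j,f_j)$ coming from the Frenet equations. For each $j$ the $j$-th sub-system is satisfied, so by the fundamental theorem for simply-connected surfaces in $M^2(\epsilon)\times\r$ there exists a unique (up to congruence in $M^2(\epsilon)\times\r$) CMC-isometric immersion $\Phi_j:(\Sigma,g)\to M^2(\epsilon)\times\r$ with $|H_j|=|H|$ realizing these data. This produces the assignment $[\Phi]\mapsto([\Phi_1],[\Phi_2])$.

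For the inverse, given $(\Phi_1,\Phi_2)$ with $|H_1|=|H_2|=h$ and matching induced metric, assemble the two sets of data into a candidate for \eqref{eq:compatibilidad-PMC}. The Gauss and Ricci equations in \eqref{eq:Gauss-Ricci} follow automatically by adding and subtracting the two instances of \eqref{eq:modulo-fj}, so the combined system is consistent. The Frenet system preceding Proposition~\ref{prop:diferenciales-holomorfas} is then integrable, and by simple-connectedness and the standard fundamental theorem it integrates to a unique (up to congruence) PMC-isometric immersion $\Phi:(\Sigma,g)\to M^2(\epsilon)\times M^2(\epsilon)$ with $|H|=h$ realizing the combined data. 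The identity $2\Theta^j_{AR}=\Theta_j$ is then immediate: the proof of Proposition~\ref{prop:diferenciales-holomorfas} shows $\Theta_j=(2\sqrt{2}|H|f_j+(\epsilon/2)\gamma_j^2)(dz)^2$, and the same functions $(f_j,\gamma_j)$ give $\Theta^j_{AR}$ for $\Phi_j$ via Lemma~\ref{lm:diferencial-AR}(1).

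For the factorization statement, if $\Phi$ factors as $\iota\circ\Phi_0$ with $\iota:M^2(\epsilon)\times\r\hookrightarrow M^2(\epsilon)\times M^2(\epsilon)$ the totally geodesic inclusion of Proposition~\ref{prop:subvariedades-totalmente-umbilicales-CMC}, Lemma~\ref{lm:diferencial-AR}(1) gives that the data for $j=1$ and $j=2$ coincide with those of $\Phi_0$, so $[\Phi_1]=[\Phi_2]=[\Phi_0]$. Conversely, if $[\Phi_1]=[\Phi_2]=:[\Phi_0]$, the factored PMC-immersion $\iota\circ\Phi_0$ corresponds under the map just constructed to the pair $([\Phi_0],[\Phi_0])=([\Phi_1],[\Phi_2])$, and injectivity of the correspondence forces $[\Phi]=[\iota\circ\Phi_0]$, i.e.\ $\Phi$ factors. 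The main obstacle I expect is the precise matching in the first step between the $j$-th sub-system of \eqref{eq:compatibilidad-PMC} and the Abresh--Rosenberg integrability equations in $M^2(\epsilon)\times\r$, in particular identifying $C_j$ with the angle function and $(\gamma_j,f_j)$ with the correctly normalized Hopf data. A secondary subtlety is the sign and orientation book-keeping needed for the correspondence to be bijective on congruence classes rather than only up to a residual $\z_2$ ambiguity coming from the choice of $\tilde H$ relative to $H$.
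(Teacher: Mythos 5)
Your proposal follows essentially the same route as the paper: the integrability conditions \eqref{eq:compatibilidad-PMC} split into two sub-systems sharing only $u$ and $|H|$, each sub-system is identified with the integrability equations \eqref{eq:compatibilidad-CMC} for a CMC-isometric immersion into $M^2(\epsilon)\times\r$, and the fundamental theorem for simply-connected surfaces is applied in both directions. The ``main obstacle'' you flag is resolved in the paper exactly as you anticipate: the dictionary is $\nu_j=C_j$, $p_j=\sqrt{2}f_j$, and the height function $\eta_j$ is obtained as a primitive via $i\gamma_j=\sqrt{2}(\eta_j)_z$, which exists because \eqref{eq:compatibilidad-PMC} forces $i(\gamma_j)_{\bar z}$ to be real and $\Sigma$ is simply connected --- this is where simple-connectedness enters beyond the fundamental theorem, and it is the one concrete step your sketch leaves open. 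The only place you genuinely diverge is the implication $[\Phi_1]=[\Phi_2]\Rightarrow\Phi$ factorizes: you deduce it abstractly from injectivity of the correspondence together with the fact that the factored immersion maps to $([\Phi_0],[\Phi_0])$, a clean and valid shortcut, whereas the paper argues directly, showing from $f_1=f_2$, $\gamma_1=\gamma_2$, $C_1=C_2$ that $\tilde H_z=0$, hence $\tilde H$ is a constant vector $A=(A_1,A_2)$ of $\r^6$ (or $\r^6_2$), and then that $\langle\psi,A_2\rangle=0$ with $A_2\neq0$ forces $\psi$ onto a geodesic; your version buys brevity at the cost of leaning harder on the ``easy to check'' congruence-compatibility of the correspondence. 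One small inaccuracy: you say Gauss and Ricci ``follow by adding and subtracting the two instances of \eqref{eq:modulo-fj}'', but \eqref{eq:modulo-fj} is itself derived from Gauss and Ricci; the non-circular statement, already made in the paper, is that Gauss and Ricci are consequences of \eqref{eq:compatibilidad-PMC} alone, so nothing extra needs verifying when the two CMC data sets are assembled.
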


\begin{proof}
In order to proof this result we are going to use the integrability equations~\eqref{eq:compatibilidad-PMC} for PMC-conformal immersions given in the previous section  and the corresponding ones for CMC-immersions in $M^2(\epsilon) \times \r$ given in \cite{B}. As we work with conformal immersions we are going to use the conformal version of these equations obtained in~\cite{FM}, which can be described as follows.

Let $\Psi = (\psi, \eta): (\Sigma, g) \rightarrow M^2(\epsilon) \times \r$ be a CMC-isometric immersion with mean curvature $H$ and $z = x + iy$ a local isothermal parameter such that $g = e^{2u}|\mathrm{d} z|^2$. Then the Frenet equations of $\Psi: \Sigma \longrightarrow M^2(\epsilon) \times \r \subset \r^3\times \r$ (or $\r^3_1$) are given by:
\begin{equation}
\begin{aligned}
\Psi_{zz} &= 2u_z \Psi_z + p N + \epsilon \eta_z^2 \hat{\Psi} \\
\Psi_{z\bar{z}} &= \frac{e^{2u}}{2}H N + \epsilon \left( |\eta_z|^2 - \frac{e^{2u}}{2}\right) \hat{\Psi} \\
N_z &= -H\Psi_z - p \Psi_{\bar{z}} + \epsilon \eta_z \nu \hat{\Psi}
\end{aligned}
\label{eq:frenet-CMC}
\end{equation}
where $\hat{\Psi} = (\psi, 0)$, $N$ is a unit normal vector to the immersion $\Psi$, $p$ is a complex function and $\nu$ is a real function defined by $\nu = \langle N, (0,1)\rangle$.  The integrability equations of this Frenet system are given by (see~\cite[Theorem 2.3]{FM} for more details):
\begin{equation}
\begin{aligned}
  p_{\bar{z}} &= \epsilon \frac{e^{2u}}{2} \nu \eta_z, &  \nu_z &= -H \eta_z - 2e^{-2u} p \eta_{\bar{z}} \\
  \eta_{z\bar{z}} &= \frac{e^{2u}}{2}H \nu,\quad  &
  |\eta_z|^2 &= \frac{e^{2u}}{4}(1 - \nu^2)
\end{aligned}
\label{eq:compatibilidad-CMC}
\end{equation}

Now we prove the result. Let $\Phi: (\Sigma, g) \rightarrow M^2(\epsilon) \times M^2(\epsilon)$ be a PMC-isometric immersion of a simply-connected oriented surface $\Sigma$ and $z$ an isothermal parameter such that $g = e^{2u}|\mathrm{d} z|^2$. Using~\eqref{eq:compatibilidad-PMC} it is followed that $i(\gamma_j)_{\bar{z}}$ is a real function and so, because $\Sigma$ is simply-connected, there exists a function $\eta_j$ such that $i\gamma_j = \sqrt{2}(\eta_j)_z$, $j = 1, 2$. We consider the data
\[
\bigl(u, H_j = |H|, \nu_j = C_j, \eta_j, p_j = \sqrt{2}f_j\bigr), \quad j= 1,2.
\]
From~\eqref{eq:compatibilidad-PMC}, it is followed that these data satisfy~\eqref{eq:compatibilidad-CMC}, and so there exist two CMC-isometric immersions $\Phi_j: (\Sigma, g) \rightarrow M^2(\epsilon) \times \r$ with $|H_j| = |H|$, $j = 1, 2$.

Moreover, it is easy to check that if $\Phi$ is congruent to $\Psi$, then the co\-rres\-pon\-ding $\Phi_j$ and $\Psi_j$ are also congruent for $j=1,2$.

Conversely, let $\Phi_j = (\phi_j, \eta_j):(\Sigma, g) \rightarrow M^2(\epsilon) \times \r$ be two CMC-isometric immersions with $|H_1| = |H_2|$ and $z$ an isothermal parameter with $g = e^{2u}|\mathrm{d} z|^2$. We may suppose, composing with an appropriate isometry if necessary, that $H_1 = H_2 > 0$. We consider data
\[
  \left(u, |H| = H_1 = H_2, C_j = \nu_j, \gamma_j = -i\sqrt{2}(\eta_j)_z, f_j = \frac{p_j}{\sqrt{2}}:\, j = 1,2\right).
\]
From~\eqref{eq:compatibilidad-CMC}, it is followed that  these data satisfy~\eqref{eq:compatibilidad-PMC}, and so there exists a PMC-isometric immersion $\Phi: (\Sigma, g) \rightarrow M^2(\epsilon) \times M^2(\epsilon)$ with $|H| = |H_1| = |H_2|$.

Moreover, it is easy to check that if $\Phi_j$ are congruent to $\Psi_j$, $j=1,2$, then the corresponding $\Phi$ and $\Psi$ are also congruent.

Secondly, as the Abresh-Rosenberg differential for CMC-surfaces can be expressed as $\Theta^j_{AR} = (|H_j|p_j - \frac{\epsilon}{2} (\eta_j)_z^2)(\mathrm{d} z)^2$ and the Hopf differentials for PMC-surfaces as $\Theta_j=\left(2\sqrt{2}|H|f_j+\frac{\epsilon}{2}\gamma_j^2\right)\,(dz)^2$, using the above relations between the data, we obtain that $2\Theta^j_{AR}=\Theta_j$, $j=1,2$.

Finally, if $[\Phi_1]=[\Phi_2]$, then $\Phi_1, \Phi_2: (\Sigma, g) \rightarrow M^2(\epsilon)\times \r$ are two CMC-isometric immersions satisfiyng $\Phi_2 = F \circ \Phi_1$, where $F$ is an isometry of $M^2(\epsilon)\times \r$. Then, given an isothermal parameter $z$, and possible up to a congruence, we can take the data of $\Phi_j$ as $|H_1| = |H_2|$, $p_1 = p_2$, $\nu_1 = \nu_2$ and $\eta_1 = \eta_2$. Therefore the associated PMC-isometric immersion $\Phi = (\phi, \psi):(\Sigma, g) \rightarrow M^2(\epsilon)\times M^2(\epsilon)$ has $f_1 = f_2$, $\gamma_1=\gamma_2$ and $C_1 = C_2$. Now as $\frac{-\sqrt{2}\,i}{|H|}\tilde{H}=\xi-\bar{\xi}$, from the Frenet equations we obtain that the derivative of the function $\tilde{H}:\Sigma\rightarrow\r^6\,(\hbox{or}\,\r^6_2)$ is given by
\[
\tilde{H}_z=\frac{i|H|}{\sqrt{2}}(\xi_z-\bar{\xi}_z)=0.
\]
So $\tilde{H}=A$ for some vector $A\in\r^6\,(\hbox{or}\,\r^6_2)$ with $|A|=|H|>0$  and hence $0=\langle\Phi,\tilde{H}\rangle=\langle\Phi,A\rangle$ and $0=\langle\hat{\Phi},\tilde{H}\rangle=\langle\hat{\Phi},A\rangle$. Now if $\Phi=(\phi,\psi)$ and $A=(A_ 1,A_2)$, we finally get $\langle\phi,A_1\rangle=\langle\psi,A_2\rangle=0$. If $A_2=0$, we have that $A_1\not=0$ and so $\hbox{Jac}\,(\phi)=0$ and hence $C_1=-C_2$ which is not the case. So $A_2\not=0$ and $\langle\psi,A_2\rangle=0$, which implies that $\psi$ lies on a geodesic of $M^2(\epsilon)$. Hence, in this case, the immersion $\Phi$ factorizes through the totally geodesic hypersurface $M^2(\epsilon)\times\r$ as a CMC-surface.

Conversely given a PMC-immersion $\Phi:(\Sigma, g) \rightarrow M^2(\epsilon)\times M^2(\epsilon)$ such that $\Phi$ factorizes through the totally geodesic hypersurface $M^2(\epsilon)\times\r$ then from the proof of Lemma 1 we have that the data of $\Phi$ satisfy $f_1 = f_2$, $\gamma_1=\gamma_2$ and $C_1=C_2$. Hence, the corresponding data of $\Phi_1$ and $\Phi_2$ are the same and so they are congruent, i.e. $[\Phi_1]=[\Phi_2]$.
\end{proof}

The examples of product of curves of constant curvatures given in Exam\-ple~\ref{ex:producto-curvas} satisfy that $C_1=C_2=0$ and so in particular they are Lagrangian PMC-surfaces with respect to both complex structures. In the following result we classify (even locally) those PMC-surfaces of $M^2(\epsilon)\times M^2(\epsilon)$  which are Lagrangian with respect to some of the complex structures.
\begin{theorem}\label{tm:PMC-lagrangianas}
Let $\Phi:\Sigma\rightarrow M^2(\epsilon)\times M^2(\epsilon)$ be a  PMC-immersion of a surface $\Sigma$. If $\Phi$ is Lagrangian with respect to some of the K\"ahler structures $J_1$ or $J_2$, then $\Phi(\Sigma)$ is an open subset of some of the examples described in Example~\ref{ex:producto-curvas}.
\end{theorem}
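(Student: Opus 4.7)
Since the conclusion is local, I plan to work on a simply-connected open piece of $\Sigma$. By the anti-holomorphic isometry $(\mathrm{Id},F)$ from Section~3, which interchanges $J_1\leftrightarrow J_2$ and hence $C_1\leftrightarrow C_2$, it suffices to treat the case $C_1\equiv 0$. From $(C_1)_z=2ie^{-2u}f_1\bar\gamma_1-i|H|\gamma_1/\sqrt 2=0$ in~\eqref{eq:compatibilidad-PMC}, combined with $|\gamma_1|^2=e^{2u}/2>0$, one gets $|f_1|^2=|H|^2e^{4u}/8$; comparing with~\eqref{eq:modulo-fj} then forces $K\equiv 0$.

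The heart of the argument is to deduce that $C_2\equiv 0$ as well. Since $C_1=0$, $J_1$ restricts to an isomorphism $T\Sigma\to T^{\perp}\Sigma$, so
\[
X=-\tfrac{1}{|H|}J_1 H,\qquad Y=-\tfrac{1}{|H|}J_1\tilde H
\]
are tangent vector fields; they are orthonormal, and $\nabla^{\perp}H=\nabla^{\perp}\tilde H=0$ together with $\bar\nabla J_1=0$ show they are parallel on $(\Sigma,g)$. With the orientation determined by $\Sigma$ (equivalently, by $\{\tilde H,H\}$ via the paper's convention), the oriented normal frame is $e_3=\tilde H/|H|=J_1 Y$ and $e_4=H/|H|=J_1 X$, and $\{X,Y\}$ is the oriented tangent frame. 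Using that the ambient curvature commutes with the parallel $J_1$,
\[
\bar K^{\perp}=\bar R(X,Y,J_1 Y,J_1 X)=\langle J_1\bar R(X,Y)Y,J_1 X\rangle=\langle\bar R(X,Y)Y,X\rangle=\bar K.
\]
Combined with the Section~3 identities $\bar K=\epsilon(C_1^2+C_2^2)/2$ and $\bar K^{\perp}=\epsilon(C_1^2-C_2^2)/2$ and the hypothesis $C_1=0$, this yields $\epsilon C_2^2=-\epsilon C_2^2$, hence $C_2\equiv 0$.

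With both K\"ahler functions vanishing, the Jacobians $\mathrm{Jac}(\phi)=(C_1+C_2)/2$ and $\mathrm{Jac}(\psi)=(C_1-C_2)/2$ are identically zero, so $\phi$ and $\psi$ have rank at most one. Since $\Phi=(\phi,\psi)$ is an immersion, each has rank exactly one on a dense open set and locally factors through a regular curve in $M^2(\epsilon)$; adapted coordinates realise $\Phi(t,s)=(\alpha(t),\beta(s))$ as a product immersion of two curves, and the PMC assumption (as noted before Example~\ref{ex:producto-curvas}) forces $\alpha$ and $\beta$ to have constant geodesic curvature. Therefore $\Phi(\Sigma)$ is an open subset of one of the complete examples in that list. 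The main obstacle is the middle step: noticing that the Lagrangian-plus-parallel structure produces the orthonormal parallel frame $\{X,Y\}$ with $J_1 X=e_4$ and $J_1 Y=e_3$, after which parallelism of $J_1$ immediately forces $\bar K^{\perp}=\bar K$, and the two curvature formulas collide to give $C_2=0$.
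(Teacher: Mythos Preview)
Your argument is correct and is considerably shorter than the paper's. Both proofs begin the same way: reduce to $C_1\equiv 0$ and observe that $J_1H$ being a parallel tangent field forces $K\equiv 0$. From there the approaches diverge. The paper shows $C_2\equiv 0$ by an analytic route: it treats the cases $\Theta_2\equiv 0$ and $\Theta_2\not\equiv 0$ separately, uses \eqref{eq:gradiente-C-general} and \eqref{eq:laplaciano-C} to see that $C_2$ is isoparametric, feeds this into the Bochner formula on the flat surface, and extracts a nontrivial polynomial identity in $C_2$ that forces it to be constant, hence zero. Your argument replaces all of this with a single geometric observation: on a $J_1$-Lagrangian surface the parallel complex structure $J_1$ carries the oriented tangent frame $\{X,Y\}$ to the oriented normal frame $\{e_4,e_3\}$, and since $\bar R(\cdot,\cdot)$ commutes with $J_1$ one gets $\bar K^{\perp}=\bar R(X,Y,J_1Y,J_1X)=\bar R(X,Y,Y,X)=\bar K$ directly. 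Combined with the formulas $\bar K=\epsilon(C_1^2+C_2^2)/2$ and $\bar K^{\perp}=\epsilon(C_1^2-C_2^2)/2$ this kills $C_2$ immediately. The orientation check (that $\{X,Y,\tilde H/|H|,H/|H|\}$ is positively oriented for the $J_1$-orientation) is the one place where you should be a bit more explicit, since the wrong sign would make the identity vacuous; but it does come out correctly, e.g.\ because $\{e_3,J_1e_3,e_4,J_1e_4\}=\{e_3,-Y,e_4,-X\}$ is $J_1$-positively oriented. Your route is more conceptual and avoids the case split and the isoparametric/Bochner machinery; the paper's route, while longer, stays entirely within the scalar integrability data and would adapt more readily to situations where no such normal--tangent isometry is available.
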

\begin{remark}
This result is a generalization of Theorem 1 in \cite{CU}, where the authors proved the result when $\epsilon=1$, i.e. when the ambient space is $\s^2\times\s^2$ and the surface is compact.
\end{remark}

\begin{proof}
Taking the two-fold oriented covering of $\Sigma$ if necessary, we can assume that $\Sigma$ is orientable. Without lost of generality we suppose that $\Phi$ is a Lagrangian immersion with respect to $J_1$, i.e. $C_1=0$. Now, it is clear that $J_1H$ is a parallel tangent vector field to $\Sigma$ and hence $\Sigma$ is flat, i.e. $K=0$.

Now we are going to prove that the another K\"ahler function $C_2$ vanishes too and to do that we consider the holomorphic differential $\Theta_2$.

First, if $\Theta_2\equiv 0$, then from \eqref{eq:gradiente-C-general} and as $K=0$ one obtains that
\[
|\nabla C_2|^2=(1-C_2^2+4\epsilon|H|^2)\left(\frac{\epsilon(1-C_2^2)}{4}+|H|^2+\epsilon C_2^2\right).
\]
As $K=0$, \eqref{eq:laplaciano-C} becomes in
\[
\Delta C_2=-C_2(4|H|^2+\epsilon(1+C_2^2)).
\]
Hence the last two equations say that the function $C_2$ is isoparametric.
 Now we follow a standard reasoning. We work on the open set $U$ where $\nabla C_2\not=0$. We are going to prove that $U=\emptyset$ and so $C_2$ must be constant. As $K=0$, the Bochner formula says that
\[
\frac{1}{2}\Delta|\nabla C_2|^2=\langle \nabla C_2,\nabla(\Delta C_2)\rangle+\sum_{i=1}^2|\nabla_{e_i}\nabla C_2|^2,
\]
where $\{e_1,e_2\}$ is an orthonormal frame on $U$, and where we can take $e_1=\nabla C_2/|\nabla C_2|$.  Using the last two equations, i.e. that $C_2$ is isoparametric, it is not difficult to check that the Bochner formula becomes in
\[
0 = (4|H|^2 + \epsilon(1-C_2^2))\bigl( 3\epsilon(\epsilon + 4|H|^2)^2 - 18(\epsilon + 4|H|^2)C_2^2 - \epsilon C_2^4\bigr)
\]
So $C_2$ on $U$ satisfies the above non trivial polynomial and therefore $C_2$ must be constant on each connected component of $U$, which is impossible because $\nabla C_2 \neq 0$ on $U$.
We have proved that $U = \emptyset$. Therefore $C_2$ is constant. But $(C_2)_{z}=0$ implies that $(1-C_2^2)f_2=\frac{|H|}{\sqrt 2}\gamma_2^2$. From here and \eqref{eq:modulo-fj} one obtains that $C_2^2=\epsilon K=0$. So in this case our immersion $\Phi$ is also Lagrangian with respect to $J_2$.

Secondly if $\Theta_2\not=0$, then it has isolated zeroes. In this case from the integrability equations, the $1$-differential
\[
\Upsilon(z)=\gamma_1(z)\,(dz)
\]
is also holomorphic and without zeroes. Therefore $\Theta_2/\Upsilon^2$ is a holomorphic function. Let $p$ a point with $\Theta_2(p)\not=0$. Then in a connected neighborhood $U$ of $p$ we can normalize this holomorphic function as
\[
\Theta_2/\Upsilon^2=\lambda,\quad \lambda \in \r^*.
\]
Hence $|\Theta_2|^2=\lambda^2|\Upsilon|^4$. Now, from \eqref{eq:gradiente-C-general}, the integrability equations and the facts that $C_1=0$ and $K=0$ we get
\[
|\nabla C_2|^2=(1-C_2^2+4\epsilon|H|^2)\left(\frac{\epsilon(1-C_2^2)}{4}+|H|^2+\epsilon C_2^2\right)-\frac{\epsilon\lambda^2}{2}.
\]
As $K=0$, \eqref{eq:laplaciano-C} becomes in
\[
\Delta C_2=-C_2(4|H|^2+\epsilon(1+C_2^2)).
\]
In this second case the last two equations say that the function $C_2$ is also isoparametric on $U$.

Then, following a similar reasoning as in the first case, we obtain that $C_2=0$ on $U$. As this can be done at any point of $\Sigma$ except at the isolated zeroes of $\Theta_2$, we conclude, in this second case, that our immersion $\Phi$ is also Lagrangian with respect to $J_2$.

As a consequence, $\hbox{Jac}\,(\phi)=\hbox{Jac}\,(\psi)=0$ and the immersion $\Phi$ is the product of two curves. As the mean curvature is parallel we obtain the result.
\end{proof}

As we showed in the proof of Lemma~\ref{lm:diferencial-AR}, PMC-surfaces of $M^2(\epsilon)\times M^2(\epsilon)$ coming from CMC-surfaces of $M^2(\epsilon)\times\r$ have $C_1=C_2$ and in particular their extrinsic normal curvatures $\bar{K}^{\perp}=C_1^2-C_2^2$ vanish. Next theorem classifies the PMC-surfaces of $M^2(\epsilon)\times M^2(\epsilon)$ such that $\bar{K}^{\perp}=0$. Beside the above family, an interesting family of examples appears in the classification which we describe in the next result.
\begin{proposition}\label{prop:familia-ejemplos-K-normal-0}
Let $a,b,c$ be real numbers with $b>0$ and $h:I\subset\r\rightarrow\r$ a non-constant solution of the O.D.E.
\begin{equation}
(h')^2(x)=(a-h^2(x))\left((a-h^2(x))-\epsilon b(1+(h(x)-c)^2)\right),
\label{eq:ecuacion-h}
\end{equation}
satisfying $\epsilon(a-h^2(x))>0$, $\forall x\in I$. Then
$$\Phi=(\phi,\psi):I\times\r\rightarrow M^2(\epsilon)\times M^2(\epsilon)$$ where $\psi(x,y)=\psi(x)$ is the curve in $M^2(\epsilon)$ parametrizes by $|\psi'(x)|^2=b(1+(h(x)-c)^2)$ and with curvature $K_{\psi}(x)=-\frac{b\epsilon(a-h^2(x))}{|\psi'(x)|^3}$ and $\phi:I\times\r\rightarrow M^2(\epsilon)$ is given by
\begin{enumerate}[1.]
\item If $a > 0$,
\[
\phi(x,y) = \frac{1}{\sqrt{a}}\Bigl(\sqrt{\epsilon(a-h^2(x))}
 \cos(\sqrt{a}y),\sqrt{\epsilon(a-h^2(x))}  \sin(\sqrt{a}y), h(x)\Bigr),
 \]

\item If $a<0$ (which implies $\epsilon=-1$),
\[
\phi(x,y) = \frac{1}{\sqrt{-a}}\Bigl(h(x), \sqrt{h^2(x)-a} \sinh(\sqrt{-a}y), \sqrt{h^2(x)-a} \cosh(\sqrt{-a}y)\Bigr),
\]

\item If $a=0$ (which implies $\epsilon=-1$),
\[
\phi(x,y)=\frac{1}{2h(x)}\left((y^2-1)h^2(x)+1,2yh^2(x),(y^2+1)h^2(x)+1\right),
\]
\end{enumerate}
is a PMC-immersion.

All the examples described above satisfy $4|H|^2=b$, $C_1=C_2$ with $C_1^2=\frac{h'^2}{(a-h^2)^2}$, they are conformal immersions with  the induced metric given by $\epsilon(a-h(x)^2)(dx^2+dy^2)$ and the Hopf differentials given by
\[
\Theta_j=\frac{\epsilon b}{4}(a+1-c^2+2(-1)^{j}ic)(dz)^2,\quad j=1,2.
\]
\end{proposition}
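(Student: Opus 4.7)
The plan is direct verification: take $\Phi = (\phi,\psi)$ as the given explicit map into $\r^6$ or $\r^6_2$ and check each claim by a computation whose essential algebraic ingredient is the ODE \eqref{eq:ecuacion-h} for $h$. First I would verify that $\phi$ actually lies in $M^2(\epsilon)$: in case~1, $\langle\phi,\phi\rangle = [\epsilon(a-h^2)+h^2]/a = \epsilon$ with the Euclidean or Lorentzian inner product distinguishing the two possibilities; cases~2 and~3 reduce to $\cosh^2-\sinh^2=1$ or to an analogous collapse (plus checking $\phi_3>0$ in the hyperbolic case). The curve $\psi$ exists in $M^2(\epsilon)$ because the prescribed $|\psi'|^2 = b(1+(h-c)^2)$ and $K_\psi = -b\epsilon(a-h^2)/|\psi'|^3$ determine it uniquely up to an isometry of $M^2(\epsilon)$.

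Next I would prove that $(x,y)$ are isothermal coordinates with $e^{2u} = \epsilon(a-h^2)$, and then read off $C_j$. Since $\psi$ depends only on $x$, the metric decomposes as
\[
|\Phi_x|^2 = |\phi_x|^2 + |\psi'|^2,\quad |\Phi_y|^2 = |\phi_y|^2,\quad \langle\Phi_x,\Phi_y\rangle = \langle\phi_x,\phi_y\rangle.
\]
A direct computation gives $\langle\phi_x,\phi_y\rangle = 0$, $|\phi_y|^2 = \epsilon(a-h^2)$ and $|\phi_x|^2 = (h')^2/[\epsilon(a-h^2)]$; substituting \eqref{eq:ecuacion-h} collapses $|\Phi_x|^2$ to $\epsilon(a-h^2) = |\Phi_y|^2$. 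For the K\"ahler functions, since $\psi_y = 0$, both $\Phi^*\omega_1$ and $\Phi^*\omega_2$ reduce to $\omega(\phi_x,\phi_y)\,dx\wedge dy$, so $C_1 = C_2$ automatically; a short determinant computation evaluates $\omega(\phi_x,\phi_y) = \langle J\phi_x,\phi_y\rangle$ to $\pm h'$, which produces the stated $C_j^2 = (h')^2/(a-h^2)^2$.

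To obtain $4|H|^2 = b$, $\nabla^\perp H = 0$, and the formula for $\Theta_j$, I pass to $z = x+iy$ and read the coefficients $(H,\xi,f_j,\gamma_j)$ off the Frenet system listed immediately before Proposition~\ref{prop:diferenciales-holomorfas}. Decomposing $\Phi_{zz}$ and $\Phi_{z\bar z}$ along the ambient frame $\{\Phi_z,\Phi_{\bar z},\xi,\bar\xi,\Phi,\hat\Phi\}$ produces explicit expressions for $f_j$ and $\gamma_j$ as functions of $h$ and $h'$; using \eqref{eq:ecuacion-h} one then checks that the scalar factor of $H$ has constant length $\sqrt b/2$ and that $\nabla^\perp H = 0$. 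Substituting those $f_j,\gamma_j$ into the form $\Theta_j = (2\sqrt 2|H|f_j + \epsilon\gamma_j^2/2)(dz)^2$ established in the proof of Proposition~\ref{prop:diferenciales-holomorfas}, and simplifying once more via \eqref{eq:ecuacion-h}, reduces the coefficient of $(dz)^2$ to the claimed constant $\frac{\epsilon b}{4}(a+1-c^2 + 2(-1)^j ic)$.

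The main obstacle is notational rather than structural: the argument has to be carried out uniformly for the three qualitatively different parametrizations of $\phi$ (rotationally invariant, translationally invariant along a hyperbolic geodesic, and horocyclically invariant) while tracking the sign differences between the Euclidean ($\epsilon=1$) and Lorentzian ($\epsilon=-1$) inner products. In every step it is \eqref{eq:ecuacion-h} that turns apparently $x$-dependent scalars into constants, which is precisely what forces $|H|^2$, $\bar K^\perp$, and the $\Theta_j$ to be constant despite the non-constancy of the intermediate quantities $f_j$ and $\gamma_j$.
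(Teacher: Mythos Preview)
Your strategy is the paper's: verify directly that $\Phi$ is conformal with factor $\epsilon(a-h^2)$, compute $H$, check that it is parallel, and then evaluate $\Theta_j$. The only difference is packaging. The paper stays in real coordinates: it obtains $H$ from $(\Phi_{xx}+\Phi_{yy})^T/2e^{2u}$ as an explicit vector, reads off $|H|^2=b/4$, and then shows by brute-force differentiation that $\bar\nabla_{\partial_x}H$ and $\bar\nabla_{\partial_y}H$ are tangent to $\Sigma$; for the Hopf differentials it simply writes down $\tilde H$ explicitly and plugs in. Your route through the complex Frenet data $(\xi,f_j,\gamma_j)$ is equivalent, but watch the logical order: the Frenet system displayed before Proposition~\ref{prop:diferenciales-holomorfas} is written under the standing hypothesis $\nabla^\perp\xi=0$, so you must first extract $H$ from $\Phi_{z\bar z}$ and verify parallelism (e.g.\ check that $H_z$ has no normal component) \emph{before} invoking the remaining equations to read off $f_j,\gamma_j$, or the argument is circular.
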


\begin{remark}~
\label{rm:familia-ejemplos-K-normal-0}
\begin{enumerate}[1.]
    \item Following Proposition~\ref{prop:familia-ejemplos-K-normal-0}, the constant solutions of equation~\eqref{eq:ecuacion-h} satisfying $\epsilon(a-h^2)>0$ produce the PMC-surfaces of $M^2(\epsilon)\times M^2(\epsilon)$ with $C_1=C_2=0$, and so, from Theorem 1, they are the examples described in Example~\ref{ex:producto-curvas}.
    \item \label{item:isometrias:rm:familia-ejemplos-K-normal-0} All the previous examples are invariant under the $1$-parametric group of isometries $\{I(\theta) \times \mathrm{Id},\, \theta \in \r\}$ of $M^2(\epsilon) \times M^2(\epsilon)$, where $I(\theta):M^2(\epsilon)\rightarrow M^2(\epsilon)$ is the isometry given by:

        \begin{tabular}{ccc}
        $a > 0$ &$a < 0$ &$a = 0$\\[.1cm]
        $\begin{pmatrix}
            \cos \theta  & -\sin \theta  & 0 \\
            \sin \theta & \cos \theta & 0 \\
            0   &   0   &   1\\
        \end{pmatrix}$
        &$\begin{pmatrix}
            1 & 0 & 0 \\
            0 & \cosh \theta  & \sinh \theta \\
            0   &   \sinh \theta   &   \cosh\theta \\
        \end{pmatrix}$
        &$\begin{pmatrix}
            1 - \frac{\theta^2}{2} & \theta  & \frac{\theta^2}{2} \\
            -\theta   &   1  &   \theta  \\
            - \frac{\theta^2}{2} & \theta  & 1 + \frac{\theta^2}{2} \\
        \end{pmatrix}$
        \end{tabular}
\end{enumerate}
\end{remark}
\begin{proof}
First it is easy to check that, in the three cases,
\begin{gather*}
|\phi_x|^2 = \epsilon[(a-h^2) - \epsilon b(1+(h-c)^2)], \quad |\phi_y|^2 = \epsilon(a-h^2) \\
\langle \phi_x, \phi_y \rangle = 0, \quad \langle \phi_x, \phi_{xy} \rangle = 0, \quad \langle \phi_y, \phi_{xy} \rangle = -\epsilon h h'
\end{gather*}
 and hence $|\Phi_x|^2=|\Phi_y|^2=\epsilon(a-h^2)$ and $\langle \Phi_x, \Phi_y \rangle = 0$, which say that $\Phi$ is a conformal immersion. Now from a direct computation we have that
\begin{eqnarray*}
%\phi_{xx} = -\frac{\epsilon h'[h + \epsilon b (h-c)]}{|\phi_x|^2}\phi_x - \epsilon |\phi_x|^2\phi \\
%\phi_{yy}=\frac{\epsilon hh'}{|\phi_x|^2}\phi_x - \epsilon|\phi_y|^2\phi \\
%\phi_{xy}= \frac{-\epsilon hh'}{|\phi_y|^2}\phi_y \\
\phi_{xx} + \phi_{yy} = -\frac{bh'(h-c)}{|\phi_x|^2}\phi_x - \epsilon(|\phi_x|^2 + |\phi_y|^2)\phi.
\end{eqnarray*}
Also, the definition of the curve $\psi$ means that
\begin{eqnarray*}
\psi_{xx} + \psi_{yy} =\psi_{xx}= \frac{b h'(h-c)}{|\psi_x|^2}\psi_x - \frac{\epsilon b(a-h^2)}{|\psi_x|^2}J\psi_x - \epsilon |\psi_x|^2\psi.
\end{eqnarray*}
So therefore, as $\Phi$ is a conformal immersion, $H =  (\Phi_{xx} + \Phi_{yy})^T / 2\epsilon(a-h^2)$, where $(\,)^T$ denotes the tangential component to $M^2(\epsilon) \times M^2(\epsilon)$. Using the above formulae we get
\[
H = \frac{1}{2\epsilon(a-h^2)} \left( -\frac{b h'(h-c)}{|\phi_x|^2}\phi_x, \frac{b h'(h-c)}{|\psi_x|^2}\psi_x - \frac{\epsilon b(a-h^2)}{|\psi_x|^2}J\psi_x \right).
\]
From this equation the length of $H$ is $|H|^2 = b/4$ and after a long straightforward computation we obtain
\[
 \bar{ \nabla}_{\partial_x} H = -\frac{b(a-c h)}{2(a-h^2)} \Phi_x, \quad \bar{\nabla}_{\partial_y} H = \frac{bh(h - c)}{2(a-h^2)}\Phi_y,
\]
which proves that $H$ is parallel in the normal bundle.

Finally in order to compute the Hopf differentials we only need to know that
\[
\tilde{H} = \frac{1}{2\epsilon (a-h^2)}\left(\frac{bh'}{|\phi_x|^2}\phi_x, -\frac{bh'}{|\psi_x|^2}\psi_x - \frac{b\epsilon(a-h^2)(h-c)}{|\psi_x|^2}J\psi_x  \right)
\]
\end{proof}

Now, we are going to analyze the solutions of equation~\eqref{eq:ecuacion-h}. As the degree of the polynomial appearing in it is less than $5$, the solutions are elliptic functions which can be obtained knowing the roots of the polynomial. It is clear that every solution $h$ of equation~\eqref{eq:ecuacion-h} have not to satisfy the condition $\epsilon(a - h^2) > 0$, which is necessary to define a PMC-surface (without singularities).

If we denote by $p(t) = a - t^2$ and $q(t) = -(1 + \epsilon b)t^2 + 2\epsilon b c t - \epsilon b (1+c^2) + a$, the equation~\eqref{eq:ecuacion-h} becomes $(h')^2 = p(h)q(h)$. The condition $\epsilon(a - h^2) > 0$ means that $\epsilon p(h) > 0$ and so we obtain that $\epsilon q(h) \geq 0$ on certain interval of $\r$. This inequality of the two degree polynomial $q(h)$ gives us the restrictions
\begin{equation}
    \begin{split}
    (1+b)(a-b)\geq bc^2  &\qquad \text{if } \epsilon = +1 \\
    bc^2 \geq (b-1)(a+b) &\qquad \text{if } \epsilon = -1 \text{ and } 4|H|^2 = b > 1 \\
    c \neq 0 \text{ or } a \leq -1 &\qquad \text{if } \epsilon = -1 \text{ and } 4|H|^2 = b = 1
    \end{split}
    \label{eq:restricciones}
\end{equation}
about the parameters $a, b$ and $c$. On the other hand, it is possible to obtain all the solutions of equation~\eqref{eq:ecuacion-h} in terms of Jacobi elliptic functions (see \cite{BF}) and a deep analysis of them shows that the conditions appearing in \eqref{eq:restricciones} are also sufficient in order to the solutions of equation~\eqref{eq:ecuacion-h} satisfied $\epsilon(a-h^2)>0$. So
\begin{quote} {\it The solutions $h$ of the equation~\eqref{eq:ecuacion-h} verify $\epsilon(a - h^2)>0$  if and only if the parameters $a,b$ and $c$ of the equation satisfy the restrictions \eqref{eq:restricciones}.}
\end{quote}
The integration of equation~\eqref{eq:ecuacion-h} is not complicated but it is very long, because the roots of the polynomial appearing in the equation are of different nature depending on the values of the parameters $a,b$ and $c$ and hence the solutions of the equation are also of different nature. To illustrate the integration, we are going to integrate it in a particular case because the solution will produce a nice 1-parameter family of PMC-surfaces of $M^2(\epsilon)\times M^2(\epsilon)$.

\begin{example}\label{ex:a<-1-y-c=0}
We consider, in equation~\eqref{eq:ecuacion-h}, $\epsilon=-1$, $c=0$, $b = 1$ and from \eqref{eq:restricciones} $a \leq -1$. In this case the equation becomes in
\[
(h')^2(x)=(a+1)(a-h^2(x))
\]
and the solution $h$ with $h(0)=0$ is given by
\[
h(x) = \sqrt{-a}\sinh(\sqrt{-(1+a)}x).
\]
Hence $\epsilon (a-h^2) = -a\cosh^2(\sqrt{-(1+a)}x)$, and denoting $\lambda = \sqrt{-(1+a)}$ we get, from Proposition~\ref{prop:familia-ejemplos-K-normal-0}, that for all $\lambda \geq 0$, $\Phi_{\lambda}=(\phi,\psi):\r^2\rightarrow\h^2\times\h^2$ given by
\[
\phi(x,y)=\left(\sinh(\lambda x),\cosh(\lambda x)\sinh(\sqrt{1+\lambda^2}y)
,\cosh(\lambda x)\cosh(\sqrt{1+\lambda^2}y)\right)
\]
and $\psi(x,y)=\psi(x)$ the curve in $\h^2$ parametrized by $|\psi'|^2=1+(1+\lambda^2)\sinh^2(\lambda x)$ and with curvature $k_{\psi}=\frac{-\sqrt{1+\lambda^2}\cosh^2(\lambda x)}{|\psi'|^3}$ is a {\it PMC-conformal embedding of the complete surface $(\r^2,(1+\lambda^2)\cosh^2(\lambda x)(dx^2 + dy^2))$ with $4|H|^2=1$, $\Theta_1=\Theta_2=\frac{\lambda^2}{4}(dz)^2$}. The Gauss curvature of this metric is given by $K(x)=\frac{-\lambda^2}{\cosh^4(\lambda x)}$. When $\lambda = 0$, that is $a = -1$, $\Phi_0$ is the product of a geodesic and a horocycle, i.e. $\hat{P}_0$ in example~\ref{ex:producto-curvas}.
\end{example}

\begin{theorem}\label{tm:PMC-K-normal-nula}
Let $\Phi:\Sigma\rightarrow M^2(\epsilon)\times M^2(\epsilon)$ be a PMC-immersion of a surface $\Sigma$. Then the extrinsic normal curvature vanishes, $\bar{K}^{\perp}=0$, if and only if $\Phi$ is locally congruent to
\begin{enumerate}[1.]
\item a CMC-surface of $M^2(\epsilon)\times \r$,
\item one of the examples described in Example~\ref{ex:producto-curvas},
\item one of the examples described in Proposition $4$.
\end{enumerate}
\end{theorem}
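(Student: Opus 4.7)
The plan is to convert the hypothesis via the formula $\bar{K}^{\perp}=\epsilon(C_1^2-C_2^2)/2$ from Section~3: $\bar{K}^{\perp}\equiv 0$ is equivalent to $C_1^2\equiv C_2^2$ on $\Sigma$. Since $C_1,C_2$ solve the elliptic equation~\eqref{eq:laplaciano-C} they are real-analytic, hence so are $C_1\pm C_2$; their product vanishes identically on the connected surface $\Sigma$, so at least one of the factors vanishes identically. After composing $\Phi$ with the isometry $(\mathrm{Id},F)$ of Section~3 (which interchanges $J_1$ with $J_2$ and so swaps $(C_1,C_2)$), and reversing orientation on $\Sigma$ if needed to adjust a sign, one may assume throughout that $C_1\equiv C_2=:C$.

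If $C\equiv 0$, then $\Phi$ is Lagrangian with respect to both K\"ahler structures, so Theorem~\ref{tm:PMC-lagrangianas} identifies $\Phi(\Sigma)$ with an open subset of one of the examples of Example~\ref{ex:producto-curvas}, giving Case~2. From now on assume $C\not\equiv 0$.

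Substituting $C_1=C_2=C$ into~\eqref{eq:compatibilidad-PMC} yields $|\gamma_1|=|\gamma_2|$ and $(\gamma_1-\gamma_2)_{\bar z}=0$, so $\Upsilon:=(\gamma_1-\gamma_2)\,dz$ is a holomorphic $1$-form on $\Sigma$. If $\Upsilon\equiv 0$, then $\gamma_1=\gamma_2=:\gamma$; the equation $C_z=2ie^{-2u}f_j\bar\gamma-i|H|\gamma/\sqrt{2}$ for $j=1,2$ gives $(f_1-f_2)\bar\gamma=0$, and since $\gamma$ vanishes only at the isolated complex points, $f_1=f_2$ on $\Sigma$. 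The PMC-data of $\Phi$ now matches that of a single CMC-immersion under the dictionary of Theorem~\ref{tm:relacion-PMC-CMC}, so $[\Phi_1]=[\Phi_2]$ and, by the last statement of Theorem~\ref{tm:relacion-PMC-CMC}, $\Phi$ factorizes through $M^2(\epsilon)\times\mathbb{R}$ as a CMC-immersion. This is Case~1.

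The remaining sub-case $\Upsilon\not\equiv 0$ is the main obstacle and should produce exactly the family of Proposition~\ref{prop:familia-ejemplos-K-normal-0}. The zero set of $\Upsilon$ is discrete; on its complement, a primitive of $\Upsilon$ furnishes adapted isothermal coordinates $(x,y)$. Writing $\gamma_2=e^{i\theta}\gamma_1$ on $\{\gamma_1\ne 0\}$, the condition that $\gamma_1(1-e^{i\theta})\,dz$ is a non-zero holomorphic $1$-form, coupled with $|\gamma_1|=|\gamma_2|$ and with the remaining equations in~\eqref{eq:compatibilidad-PMC}, should rigidly tie $\theta,u,C$ together and force $u,C$, and the second fundamental form, to depend only on $x$. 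This is the infinitesimal expression of the $1$-parameter symmetry $I(\theta)\times\mathrm{Id}$ of Remark~\ref{rm:familia-ejemplos-K-normal-0}. Once the reduction to one variable is achieved, \eqref{eq:compatibilidad-PMC} collapses to a first-order ODE which, after the substitution $C^2=h'^2/(a-h^2)^2$ suggested by Proposition~\ref{prop:familia-ejemplos-K-normal-0}, becomes~\eqref{eq:ecuacion-h}; the parameters $a,b,c$ are pinned down by $|H|^2$ and the constant values of the holomorphic Hopf differentials $\Theta_1,\Theta_2$, and direct comparison with Proposition~\ref{prop:familia-ejemplos-K-normal-0} identifies $\Phi$ with one of its examples, yielding Case~3. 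The true difficulty here is extracting the hidden $\mathbb{R}$- (or $\mathbb{S}^1$-) symmetry from the bare conditions $\bar{K}^{\perp}=0$, $C_1=C_2\not\equiv 0$, and $\Upsilon\not\equiv 0$, rather than assuming invariance \emph{a priori}.
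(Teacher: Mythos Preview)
Your outline matches the paper's proof closely: the reduction from $C_1^2=C_2^2$ to $C_1=\pm C_2$ (the paper uses Courant's nodal theorem on $C_1-C_2$ rather than analyticity, but either works), the holomorphic $1$-form $(\gamma_1-\gamma_2)\,dz$, and the dichotomy on its vanishing are exactly the paper's. Your treatment of Cases~1 and~2 is correct; one small point: in Case~1 say that $\gamma$ vanishes only on a set of \emph{empty interior}---isolatedness of complex points needs the extra hypothesis of Proposition~\ref{prop:C-tiene-puntos-aislados}, but empty interior suffices for $f_1=f_2$.

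The genuine gap is Case~3, where you correctly name the obstacle but do not overcome it. Here is how the paper does it. Away from the zeros of $\Upsilon$, choose $z$ so that $\gamma_2-\gamma_1=2\sqrt{2}\,|H|$ is a real constant. Since $|\gamma_1|=|\gamma_2|$, this forces $\Re\gamma_1=-\sqrt{2}\,|H|$, i.e.\ $\gamma_1=-\sqrt{2}\,|H|+ig$ and $\gamma_2=-\bar\gamma_1$ for a real function $g$. Then $(\gamma_1)_{\bar z}=-i|H|Ce^{2u}/\sqrt{2}$ shows $g_{\bar z}$ is real, so $g=g(x)$; combined with $e^{2u}(1-C^2)=2|\gamma_1|^2=4|H|^2+2g^2$ this gives $u=u(x)$ and $C=C(x)$---that is the hidden $y$-translation symmetry you were looking for, and it drops out of the normalization rather than any separate invariance argument. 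Comparing $(\gamma_j)_z=(\gamma_j)_{\bar z}$ for $j=1,2$ then yields $C(\bar f_1-f_2)=0$, hence $\bar f_1=f_2$ (since $C\not\equiv 0$), and $\bar\Theta_1=\Theta_2$ forces $\Theta_1=\mu\,(dz)^2$ with $\mu$ constant. From here the paper integrates the Frenet system directly: $J_1\Phi_z-J_2\Phi_z=2\Re(\gamma_1\xi)$ gives $\psi_y=0$ and $J\phi_x=C\phi_y$; the relation $\bar f_1=f_2$ gives $\phi_{xy}=u'\phi_y$, so $\phi(x,y)=e^{u(x)}F(y)+G(x)$, and substitution forces $F''+aF=\mathrm{const}$ with $a:=(u')^2/C^2+\epsilon e^{2u}$ constant. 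This yields the three explicit forms of $\phi$ in Proposition~\ref{prop:familia-ejemplos-K-normal-0}; setting $h^2+\epsilon e^{2u}=a$ one finds $h'=e^{2u}C$, and the ODE~\eqref{eq:ecuacion-h} follows with $b=4|H|^2$ and $c=-\epsilon\,\Im(\mu)/(2|H|^2)$. So the missing step is not conceptually deep, but it does require the concrete normalization $\gamma_2-\gamma_1=\mathrm{const}\in\r$ and the observation $\gamma_2=-\bar\gamma_1$ to get off the ground.
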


\begin{remark}
Although $\bar{K}^{\perp}$ is well defined only for orientable surfaces, the equation $\bar{K}^{\perp}=0$, which means $C_1^2=C_2^2$, has sense even for non-orientable surfaces.
\end{remark}
\begin{proof}

First, it is clear that the examples given in 1) and 2) satisfy $\bar{K}^{\perp}=0$. Also, from Proposition~\ref{prop:familia-ejemplos-K-normal-0}, the examples given in 3) satisfy $\bar{K}^{\perp}=0$.

Suppose now that $\bar{K}^{\perp}=0$, i.e. $C_1^2=C_2^2$. Taking the two-fold oriented covering of $\Sigma$ if necessary, we can assume that $\Sigma$ is orientable.

From~\eqref{eq:laplaciano-C} we have that
\[
(\Delta+F)(C_1-C_2)=0,
\]
where $F=4|H|^2-2K+\epsilon(1+C_1^2)= 4|H|^2-2K+\epsilon(1+C_2^2)$. Now using the nodal Courant theorem, we obtain that either $C_1=C_2$ or $A=\{p\in\Sigma\,|\, C_1(p)=C_2(p)\}$ is a set of curves in $\Sigma$ and so as $C_1^2=C_2^2$ we have that $C_1+C_2=0$ on $\Sigma-A$ and hence on $\Sigma$. So we have two possibilities: $C_1=C_2$ or $C_1=-C_2$. It is clear that the surfaces with $C_1=-C_2$ can be obtained as the images of the surfaces with $C_1=C_2$ under the isometry $F:M^2(\epsilon)\times M^2(\epsilon)\rightarrow M^2(\epsilon)\times M^2(\epsilon)$ given by $F(p,q)=(q,p)$.

 Hence, we can assume that $C_1=C_2$. Then using the integrability equations~\eqref{eq:compatibilidad-PMC} we have that the $1$-differential
\[
\Omega(z)=(\gamma_2(z)-\gamma_1(z))dz
\]
is holomorphic. So either $\Omega\equiv 0$ or $\Omega$ has isolated zeroes. In the first case we have that $\gamma_1=\gamma_2$ and using that $(C_1)_z=(C_2)_z$ we obtain that $f_1=f_2$. Now, as in the proof of Theorem~\ref{tm:relacion-PMC-CMC}, it follows that $\Phi$ factorizes through a CMC-immersion of $M^2(\epsilon)\times\r$, and we obtain the case 1).

Now we study the case in which the holomorphic differential $\Omega$ is nonzero. Outside its zeroes we can normalize it as $\gamma_2-\gamma_1=2\sqrt{2}|H|$. As $C_1=C_2$ we have that $|\gamma_1|^2=|\gamma_2|^2$ and so $\Re\,(\gamma_1)=-\sqrt{2}|H|$. Hence
\[
\gamma_1=-\sqrt{2}|H|+ig,\quad \gamma_2=-\bar{\gamma}_1,
\]
for certain function $g:\Sigma\rightarrow\r$. Now, using the integrability equations~\eqref{eq:compatibilidad-PMC} we obtain that $g_{\bar{z}}=-\frac{e^{2u}}{\sqrt 2}|H|C_1$, which implies that $g(x, y) = g(x)$ and it satisfies $g'=-\sqrt{2}e^{2u}|H|C_1$, where $'$ stands for $\partial/\partial_x$. So from this equation and $e^{2u}(1-C_1^2)=2(2|H|^2+g^2)$ we deduce that $u$ and $C_1$ also satisfy $u(x,y) = u(x)$ and $C_1(x,y) = C_1(x)$.

Now, as $(\gamma_j)_z=(\gamma_j)_{\bar{z}}$, $j=1,2$, using again the integrability equations we have that $u'\gamma_j-2iC_jf_j=-\frac{ie^{2u}}{\sqrt 2}|H|C_j$, $j=1,2$. As $C_1=C_2$ and $\gamma_2=-\bar{\gamma}_1$, the above equations imply that $C_1(\bar{f_1}-f_2)=0$. Hence we have that either $C_1=C_2=0$ and $\Phi$ is the product of two curves of constant curvature and we prove 2) or $C_1^{-1}(\{0\})$ is a set of curves and so $\bar{f_1}=f_{2}$ on $\Sigma-C_1^{-1}(\{0\})$ and then on $\Sigma$.

Now we study this third case: $C_1=C_2$ non-null and $\bar{f}_1=f_2$. As $C_1$ is a function of $x$ and $(\Delta+F)(C_1)=0$, then the zeroes of $C_1$ are isolated. As $\bar{\gamma}_1^2=\gamma_2^2$, $\bar{f}_1=f_2$ and the Hopf differentials are holomorphic, we obtain that $\Theta_1=\mu(dz)^2$ and $\Theta_2=\bar{\mu}(dz)^2$ for certain complex number $\mu$. This says that
\[
2\sqrt{2}|H|f_1+\frac{\epsilon}{2}\gamma_1^2=\mu.
\]
In this situation, it is not difficult to see that the integrability conditions~\eqref{eq:compatibilidad-PMC} imply that
\begin{equation}
\begin{split}
u'&=C_1(\frac{\Im (\mu)}{2|H|^2}+\frac{\epsilon g}{\sqrt{2}|H|}), \\
e^{2u}(1-C_1^2)&=4|H|^2+2g^2, \quad g'=-\sqrt{2}e^{2u}|H|C_1.
\end{split}
\label{eq:derivada-u-g}
\end{equation}

We are going to integrate the Frenet equations. First of all, from~\eqref{eq:J1Phi-xi} and \eqref{eq:J2Phi-xi} we obtain that $J_1\Phi_z-J_2\Phi_z=2\Re\,(\gamma_1\xi)$ and  $J_1\Phi_z+J_2\Phi_z=2iC_1\Phi_z+2i\Im\,(\gamma_1\xi)$. So, taking into account the definitions of $J_j$, we get that $(0,J\psi_z)=\Re\,(\gamma_1\xi)$ and $(J\phi_z,0)=iC_1\Phi_z+i\Im\,(\gamma_1\xi)$. Hence
\begin{equation}
 J\psi_y=0,\, i.e.\quad \psi(x,y)=\psi(x),\quad \hbox{and}\quad J\phi_x=C_1\phi_y.
  \label{eq:Jpsi-Jphi}
\end{equation}

On the other hand, as $\bar{f_1}=f_2$, from the Frenet equations we have that
\[
\Phi_{zz}=u'\Phi_z+2\Re\,(f_1\xi)+\epsilon\frac{e^{2u}(1-C_1^2)}{4}\hat{\Phi},
\]
which implies, considering the imaginary part of this equation, that $\Phi_{xy}=u'\Phi_y$. This equation is irrelevant for the component $\psi$, but for the other component $\phi$, the equation $\phi_{xy}=u'\phi_y$ can be integrated to obtain that
\begin{equation}
\phi(x,y)=e^{u(x)}F(y)+G(x),
\label{eq:integracion-phi}
\end{equation}
for certain vectorial functions $F$ and $G$.

From the second equation of \eqref{eq:Jpsi-Jphi} and $\phi_{xy}=u'\phi_y$ it is easy to get that
\[
\phi_{yy}+\frac{u'}{C_1^2}\phi_x+\epsilon e^{2u}\phi=0.
\]
This equation joint with \eqref{eq:integracion-phi} say that the function $F$ satisfies the following O.D.E.
\[
F''(y)+(\frac{u'(x)^2}{C_1(x)^2}+\epsilon e^{2u(x)})F(y)+\tilde{G}(x)=0,
\]
for certain vectorial function $\tilde{G}$. From here, and taking derivatives with respect to $y$ and $x$ we get that $0=(\frac{u'(x)^2}{C_1(x)^2}+\epsilon e^{2u(x})'F'(y)$. But $e^{2u}=|\Phi_y|^2=|\phi_y|^2=e^{2u}|F'|^2$, which implies that $|F'|^2=1$. So from the above equations we finally obtain that
\begin{equation}
\frac{u'(x)^2}{C_1(x)^2}+\epsilon e^{2u(x)}=a\in\r,\quad \tilde{G}(x)=-\tilde{G}_0\in\r^3\,(\r^3_1),\quad\forall (x,y)\in\Sigma,
\label{eq:definicion-a}
\end{equation}
and so finally $F$ satisfies the following O.D.E.
\[
F''(y)+aF(y)-\tilde{G}{_0}=0.
\]
The solution of this equation is given by
\begin{eqnarray*}
F(y)=\cos(\sqrt{a}y)H_1(x)+\sin(\sqrt{a}y)H_2(x)+\frac{\tilde{G}_0}{a},\quad a>0,\\
F(y)=\cosh(\sqrt{-a}y)H_1(x)+\sinh(\sqrt{-a}y)H_2(x)+\frac{\tilde{G}_0}{a},\quad a<0,\\
F(y)=\frac{y^2}{2}\tilde{G}_0+yH_1(x)+H_2(x),\quad a=0.
\end{eqnarray*}
Using that $|F'|=1$ in these equations we obtain $H_1$ and  $H_2$ are constant, that is, $H_i(x) = H_i$, $i = 1, 2$ and that
\begin{eqnarray*}
|H_1|^2 = |H_2|^2 = 1/a,\quad \langle H_1, H_2\rangle = 0, &  a >0,\\
|H_1|^2 = -|H_2|^2 = 1/a,\quad \langle H_1, H_2\rangle = 0, &  a <0,\\
|H_1|^2 = 1, |\tilde{G}_0| = 0,\quad \langle H_1, \tilde{G}_0\rangle = 0, &  a =0
\end{eqnarray*}
Using this information in \eqref{eq:integracion-phi} we obtain that
\begin{eqnarray*}
\phi(x,y)=e^{u(x)}\cos(\sqrt{a}y)H_1+e^{u(x)}\sin(\sqrt{a}y)H_2+\hat{G}(x),\quad  a>0,\\
\phi(x,y)=e^{u(x)}\cosh(\sqrt{-a}y)H_1+e^{u(x)}\sinh(\sqrt{-a}y)H_2+\hat{G}(x),\quad  a<0,\\
\phi(x,y)=e^{u(x)}\frac{y^2}{2}\tilde{G}_0+e^{u(x)}yH_1+\hat{G}(x),\quad a=0.
\end{eqnarray*}
 As $\langle \phi, \phi_y \rangle = 0$, we deduce from the above equations that $\langle \hat{G}(x), H_j\rangle = 0$, $j = 1, 2$, $\langle \hat{G}(x), \tilde{G}_0\rangle = -e^{u(x)}$. Now, up to an isometry in $\r^3$ or $\r^3_1$ we can choose $H_1 = (1/\sqrt{a},0,0)$, $H_2 = (0,1/\sqrt{a},0)$ and $\hat{G} = h(x)(0, 0, 1/\sqrt{a})$ when $a > 0$, $H_1 = (0,0,1/\sqrt{-a})$, $H_2 = (0,1/\sqrt{-a},0)$ and $\hat{G} = h(x)(1/\sqrt{-a}, 0, 0)$ when $a < 0$, and $H_1 = (0,1,0)$, $\tilde{G}_0 = (1,0,1)$ and $\hat{G} = (\frac{1-h^2(x)}{2h(x)}, 0, \frac{1-h^2(x)}{2h(x)}+e^{u(x)})$ when $a = 0$, for certain function $h$. Therefore, the above equations become in
\begin{align*}
\phi(x,y) &= \frac{1}{\sqrt{a}}\Bigl(e^{u(x)} \cos(\sqrt{a}y), e^{u(x)} \sin(\sqrt{a}y), h(x)\Bigr), & a>0 \\
\phi(x,y) &= \frac{1}{\sqrt{-a}}\Bigl(h(x), e^{u(x)} \sinh(\sqrt{-a}y), e^{u(x)} \cosh(\sqrt{-a}y)\Bigr), & a<0 \\
\phi(x,y) &= \left( \frac{e^{u(x)}}{2}y^2 +\frac{1-h^2(x)}{2h(x)}, e^{u(x)}y, \frac{e^{u(x)}}{2}y^2  + \frac{1-h^2(x)}{2h(x)} + e^{u(x)}\right), & a = 0,
\end{align*}
where $h(x)^2 + \epsilon e^{2u(x)}= a$.

To study the curve $\psi(x)$, from \eqref{eq:Jpsi-Jphi} and as $|\phi_y|^2 = e^{2u}$ we have that
\[
|\psi_x|^2=e^{2u}-|\phi_x|^2 = e^{2u} - |J\phi_x|^2 = e^{2u} - C_1^2|\phi_y|^2 = e^{2u}(1-C_1^2)=4|H|^2+2g^2.
\]
Moreover, taking into account the Frenet equation for $\Phi_{z\bar{z}}$,  \eqref{eq:J1Phi-xi} and \eqref{eq:J2Phi-xi} we have:
\[
\begin{split}
\langle \psi_{xx}, J\psi_x\rangle &= \langle 4\Phi_{z\bar{z}}, (0, J\psi_x)\rangle = 2e^{2u}\langle H, (0, J\psi_x) \rangle = \\
&= \sqrt{2}e^{2u}|H| \langle \xi + \bar{\xi}, (0, J\psi_x) \rangle = \frac{e^{2u}|H|}{\sqrt{2}|\gamma_1|^2} (\gamma_1 + \bar{\gamma}_1) \langle J\psi_x, J\psi_x\rangle = \\
&= \frac{1}{|\gamma_1|^2}(-2|H|^2e^{2u})|\psi_x|^2 = -\frac{4|H|^2}{1-C_1^2}|\psi_x|^2=-4|H|^2e^{2u}
\end{split}
\]
and so $k_\psi(x) = -4|H|^2e^{2u}/|\psi_x|^3.$

To check that these examples are the given in Proposition~\ref{prop:familia-ejemplos-K-normal-0}, we only need to get the O.D.E. that $h$ satisfied. From \eqref{eq:definicion-a} and as $h^2+\epsilon e^{2u}=a$, we have that $h=-\frac{\epsilon u'}{C_1}$ and so \eqref{eq:derivada-u-g} implies that $h=-\frac{\epsilon\Im(\mu)}{2|H|^2}-\frac{ g}{\sqrt 2|H|}$. From \eqref{eq:derivada-u-g} again we get that $h'= e^{2u}C_1$ and then using one more time \eqref{eq:derivada-u-g} we get
\begin{eqnarray*}
(h')^2=C_1^2e^{4u}=e^{2u}(e^{2u}-4|H|^2-2g^2)\\=(a-h^2)(a-h^2-\epsilon 4|H|^2(1+(h+\epsilon\frac{\Im(\mu)}{2|H|^2})^2).
\end{eqnarray*}
Now, if we define $b=4|H|^2$ and $c=-\epsilon\frac{\Im(\mu)}{2|H|^2}$, we obtain that $h$ satisfies the equation of Proposition~\ref{prop:familia-ejemplos-K-normal-0} and the curve $\psi(x)$ satisfies
\[
|\psi'|^2=|\psi_x|^2=b(1+(h-c)^2)\quad\hbox{and}\quad k_{\psi}=-\frac{\epsilon b(a-h^2)}{|\psi'|^3}.
\]
So, in this third case our surface is one of the examples described in Proposition~\ref{prop:familia-ejemplos-K-normal-0} and we have finished the proof.
\end{proof}

\begin{theorem}\label{tm:PMC-diferenciales-nulas}
Let $\Phi:\Sigma\rightarrow M^2(\epsilon)\times M^2(\epsilon)$ be a PMC-immersion of an orientable surface $\Sigma$. The Hopf differentials vanish, i.e. $\Theta_1=\Theta_2=0$, if and only if one of the three following possibilities happens:
\begin{enumerate}[1.]
\item $\Phi(\Sigma)$ lies in $M^2(\epsilon)\times\r$ as a CMC-surface with vanishing Abresh-Rosenberg differential,
\item $\epsilon=-1$, $4|H|^2=1$ and locally $\Phi$ is the product of two hypercycles $\alpha$ and $\beta$ of $\h^2$ with curvatures $k_{\alpha}^2+k_{\beta}^2=1$,
\item  $\epsilon=-1$, $4|H|^2<1$ and locally $\Phi$ is $\Phi_0=(\phi_0,\psi_0):
(-\pi/2,\pi/2)\times\r\rightarrow\h^2\times\h^2,$ where
\[
\phi_0(x,y)=\frac{1}{\cos x}\left( \sin x,\sinh \frac{y}{\sqrt{1-2|H|^2}},\cosh \frac{y}{\sqrt{1-4|H|^2}}\right),
\]
and $\psi_0$ is the curve in $\h^2$ given by $|\psi_0'(x)|=\frac{2|H|}{\sqrt{1-4|H|^2}\cos x}$ and with curvature $k_0(x)=-\frac{\cos x}{2|H|}$.
\end{enumerate}
\end{theorem}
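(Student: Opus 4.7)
The plan is to reduce the classification to Theorem~\ref{tm:PMC-K-normal-nula}. I will first show that $\Theta_1=\Theta_2=0$ forces the extrinsic normal curvature $\bar K^\perp = \epsilon(C_1^2-C_2^2)/2$ to vanish identically, and then examine which subcases of the $\bar K^\perp=0$ classification allow both Hopf differentials to vanish.

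\textbf{Algebraic reduction.} By the proof of Proposition~\ref{prop:diferenciales-holomorfas}, $\Theta_j = (2\sqrt{2}|H|f_j + \epsilon\gamma_j^2/2)(dz)^2$, so $\Theta_j=0$ is equivalent to $f_j = -\epsilon\gamma_j^2/(4\sqrt{2}|H|)$, hence $|f_j|^2 = |\gamma_j|^4/(32|H|^2)$. Substituting this into the Ricci equation of~\eqref{eq:Gauss-Ricci} and using $|\gamma_j|^2 = e^{2u}(1-C_j^2)/2$, I get pointwise
\[
  (C_1^2-C_2^2)\bigl(16\epsilon|H|^2 + 2 - C_1^2 - C_2^2\bigr) = 0.
\]
When $\epsilon=1$ the second factor is bounded below by $16|H|^2>0$, so $C_1^2\equiv C_2^2$ at once. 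When $\epsilon=-1$, the alternative $C_1^2+C_2^2 = 2-16|H|^2$ may hold on the open set $\Omega$ where $C_1^2\neq C_2^2$, and I must exclude this possibility.

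\textbf{Ruling out the alternative for $\epsilon=-1$.} On $\Omega$ the function $C_1^2+C_2^2$ is constant, so $C_1\nabla C_1 + C_2\nabla C_2 = 0$ and therefore $C_1^2|\nabla C_1|^2 = C_2^2|\nabla C_2|^2$. Combining the Gauss equation with $|f_j|^2 = |\gamma_j|^4/(32|H|^2)$ gives $K = |H|^2-C_j^2 - (1-C_j^2)^2/(16|H|^2)$, which when inserted into~\eqref{eq:gradiente-C-general} simplifies to
\[
  |\nabla C_j|^2 = \frac{(1-C_j^2)\bigl(1 - C_j^2 - 4|H|^2\bigr)^2}{16|H|^2}.
\]
Writing $u_j=1-C_j^2$ with $u_1+u_2 = 16|H|^2$, the identity $(1-u_1)u_1(3u_1-u_2)^2 = (1-u_2)u_2(3u_2-u_1)^2$ is a nontrivial polynomial equation whose solutions (subject to the linear constraint) are isolated. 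Hence $u_j$ is locally constant on $\Omega$, so $|\nabla C_j|=0$ forces $u_j\in\{0,4|H|^2\}$; but no such pair sums to $16|H|^2$ when $|H|>0$, giving $\Omega=\emptyset$. Thus $C_1^2\equiv C_2^2$ and $\bar K^\perp\equiv0$.

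\textbf{Identifying the three cases.} By Theorem~\ref{tm:PMC-K-normal-nula}, $\Phi$ is locally one of (a) a CMC surface in $M^2(\epsilon)\times\r$, (b) a product of curves of constant curvature from Example~\ref{ex:producto-curvas}, or (c) an example from Proposition~\ref{prop:familia-ejemplos-K-normal-0}. In case (a), Lemma~\ref{lm:diferencial-AR}(1) gives $\Theta_j=2\Theta_{AR}$, so vanishing is equivalent to $\Theta_{AR}=0$, yielding Case 1. In case (b), Lemma~\ref{lm:diferencial-AR}(2) shows $\Theta_j = \frac{\epsilon+4|H|^2}{16|H|^2}(k_\alpha+(-1)^j i k_\beta)^2(dz)^2$; since $k_\alpha=k_\beta=0$ contradicts $|H|>0$, vanishing of both forces $\epsilon+4|H|^2=0$, hence $\epsilon=-1$ and $k_\alpha^2+k_\beta^2 = 4|H|^2 = 1$, which (given the constraint $|k|\leq 1$) describes a product of two hypercycles as in Case 2. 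In case (c), Proposition~\ref{prop:familia-ejemplos-K-normal-0} gives $\Theta_j = \frac{\epsilon b}{4}(a+1-c^2+2(-1)^j i c)(dz)^2$, so $\Theta_1=\Theta_2=0$ forces $c=0$ and $a=-1$; combined with the restrictions~\eqref{eq:restricciones} and the requirement that $h$ be nonconstant, this singles out $\epsilon=-1$ and $0<b=4|H|^2<1$. The ODE~\eqref{eq:ecuacion-h} then reduces to $(h')^2 = (1-b)(1+h^2)^2$, whose nonconstant solution $h(x) = \tan(\sqrt{1-b}\,x)$, substituted into the $a<0$ formula of Proposition~\ref{prop:familia-ejemplos-K-normal-0} and suitably reparametrized, reproduces the explicit surface $\Phi_0=(\phi_0,\psi_0)$ of Case 3.

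The hard part I anticipate is the exclusion of the alternative $C_1^2+C_2^2 = 2-16|H|^2$ on an open set: it requires combining the Gauss equation, the gradient formula, and the relation $C_1\nabla C_1+C_2\nabla C_2=0$, and then recognizing that the resulting polynomial constraint has only degenerate solutions that are incompatible with the linear constraint $u_1+u_2=16|H|^2$.
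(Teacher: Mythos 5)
Your proposal is correct and follows essentially the same route as the paper: the same algebraic identity $(C_1^2-C_2^2)(16\epsilon|H|^2+2-C_1^2-C_2^2)=0$, the same isoparametric-type elimination of the open set where $C_1^2\neq C_2^2$ when $\epsilon=-1$ (your reading $1-C_j^2\in\{0,4|H|^2\}$ of the gradient formula is in fact the cleaner one), and the same reduction to Theorem~\ref{tm:PMC-K-normal-nula} with the cases identified via Lemma~\ref{lm:diferencial-AR} and Proposition~\ref{prop:familia-ejemplos-K-normal-0}. The only point to tighten is in case (b): $k_\alpha^2+k_\beta^2=1$ also admits the degenerate product of a geodesic and a horocycle, which is not a product of two hypercycles but lies in $\h^2\times\r$ and therefore belongs to Case 1, as the paper notes.
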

\begin{remark}
$\Phi_0$ is a conformal embedding and the induced metric \\
$\frac{1}{(1-4|H|^2)\cos^2 x}\,(dx^2+dy^2)$ is complete and with constant curvature $4|H|^2-1$. Moreover $C_1^2=C_2^2=1-4|H|^2$.

In \cite{L} Leite described, for $|H|^2<1/4$, a CMC-isometric embedding of the hyperbolic plane with constant curvature $4|H|^2-1$ in $\h^2\times\r$ and with vanishing Abresh-Rosenberg differential. So we have two non-congruent PMC-isometric embeddings of the hyperbolic plane with constant curvature $4|H|^2-1$ into $\h^2\times\h^2$.
\end{remark}

\begin{proof}
 Suppose that $\Theta_1=\Theta_2=0$. Then we have that $16|H|^2|f_j|^2=\frac{|\gamma_j|^4}{2}$, $j=1,2$, which means that
\begin{equation}
|H|^2+\epsilon C_j^2-K=\frac{(1-C_j^2)^2}{16|H|^2},\quad j=1,2.
\label{eq:condicion-Theta-0}
 \end{equation}
 From this equation we easily get that
\[
(C_1^2-C_2^2)\left( 16\epsilon|H|^2+(1-C_1^2)+(1-C_2^2)\right)=0.
\]
If $\epsilon=1$, from the above equation we obtain that $C_1^2=C_2^2$. If $\epsilon=-1$, on the open set $O=\{p\in\Sigma\,|\,C_1^2(p)\not=C_2^2(p)\}$, we have that
\begin{equation}
C_1^2+C_2^2=2(1-8|H|^2).
\label{eq:suma-cuadrados-C_j}
\end{equation}
But on $O$, $C_1\nabla C_1=-C_2\nabla C_2$, and then using \eqref{eq:gradiente-C-general}, \eqref{eq:condicion-Theta-0} and the integrability equations~\eqref{eq:compatibilidad-PMC} we obtain that
\[
C_1^2(1-C_1^2)(1-C_1^2-4|H|^2)^2=C_2^2(1-C_2^2)(1-C_2^2-4|H|^2)^2.
\]
As on $O$, $8|H|^2\not=1$, using \eqref{eq:suma-cuadrados-C_j} we obtain that $C_j$, $j=1,2$ satisfy a nontrivial polynomial of degree $6$, which implies that $C_j$, $j=1,2$ are constant on each connected component of $O$. But using again \eqref{eq:gradiente-C-general} we get that either $C_j=0$ or $1-C_j^2=4|H|^2$ on each connected component of $O$. This contradicts \eqref{eq:suma-cuadrados-C_j}, and so $O=\emptyset$ and hence in this case ($\epsilon=-1$) $C_1^2=C_2^2$ too.

Therefore $\bar{K}^{\perp}=0$ and from Theorem~\ref{tm:PMC-K-normal-nula} we have three possibilities. In the first case, $\Phi(\Sigma)$ lies as a CMC-surface in $M^2(\epsilon)\times\r$ and, from Lemma~\ref{lm:diferencial-AR}, it has vanishing Abresh-Rosenberg differential.

In the second case, $C_j=0,\,j=1,2$, and $\Phi$ is locally one of the examples of Example~\ref{ex:producto-curvas}. As $\Theta_1=\Theta_2=0$, Lemma 1,2) says that $\epsilon=-1$ and $4|H|^2=1$. This fact only happens for the product of two suitable hypercycles or for the product of a horocycle and a geodesic, but the latter is a particular case of 1). So we have proved 2).

Finally, in the third case we have a PMC-surface described in Proposition~\ref{prop:familia-ejemplos-K-normal-0} with $\Theta_j=0,\,j=1,2$. But then $a=-1$ and $c=0$, which implies that $\epsilon=-1$. In this case, equation~\eqref{eq:ecuacion-h} becomes in
\[
(h')^2=(1-4|H|^2)(1+h^2)^2,
\]
which implies that $4|H|^2\leq 1$.

If $4|H|^2=1$, then $h$ is constant and $\Phi$ is congruent to the product of a geodesic and a horocycle which, up to a congruence, is included in case 1).

If $4|H|^2<1$, the solution of the above equation is given by $h(x)=\tan(\sqrt{1-4|H|^2}x)$ with $-\frac{\pi}{2}<\sqrt{1-4|H|^2}x< \frac{\pi}{2}$. Now, reparametrizing the immersion by $(x, y) \rightarrow (\sqrt{1-4|H|^2}x, y)$, the PMC-immersion associated to $h$ in Proposition 4 is $\Phi_0$. Hence we get 3).

The converse is clear.
\end{proof}

\begin{corollary}\label{cor:PMC-spheres}
Let $\Phi:\Sigma\rightarrow M^2(\epsilon)\times M^2(\epsilon)$ be a PMC-immersion of a sphere $\Sigma$. Then, up to congruences, $\Phi$ is a CMC-sphere in $M^2(\epsilon)\times\r$.
\end{corollary}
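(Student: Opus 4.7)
The plan is to combine the holomorphicity of the two Hopf differentials with the topology of the sphere and the classification in Theorem~\ref{tm:PMC-diferenciales-nulas}. Since $\Sigma$ is a $2$-sphere, it carries no nonzero holomorphic quadratic differentials (equivalently, $H^0(S^2, K^{\otimes 2}) = 0$; any nonzero such differential on a compact Riemann surface has $4g - 4$ zeroes counted with multiplicity, impossible when $g = 0$). Applying Proposition~\ref{prop:diferenciales-holomorfas}, both $\Theta_1$ and $\Theta_2$ are holomorphic on $\Sigma$, so $\Theta_1 = \Theta_2 = 0$.

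Theorem~\ref{tm:PMC-diferenciales-nulas} then gives three mutually exclusive possibilities: (1) $\Phi(\Sigma)$ is a CMC-sphere in $M^2(\epsilon) \times \r$ with vanishing Abresh-Rosenberg differential; (2) $\epsilon = -1$, $4|H|^2 = 1$, and $\Phi$ is locally the product of two hypercycles of $\h^2$; or (3) $\epsilon = -1$, $4|H|^2 < 1$, and $\Phi$ is locally congruent to the explicit immersion $\Phi_0$ of that theorem.

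The next step is to rule out (2) and (3) on purely intrinsic grounds, using that "local congruence" preserves the induced metric. In case (2), the induced metric of a product of two curves of $M^2(\epsilon)$ is flat (as noted in Example~\ref{ex:producto-curvas}), so $\Sigma \cong S^2$ would inherit a flat Riemannian metric; this contradicts Gauss-Bonnet, which forces $\int_\Sigma K\, dA = 4\pi$. In case (3), the remark after Theorem~\ref{tm:PMC-diferenciales-nulas} records that the induced metric of $\Phi_0$ has constant Gauss curvature $4|H|^2 - 1 < 0$; since this is a local property, $\Sigma$ would globally carry a metric of constant negative curvature, again contradicting $\chi(S^2) = 2$. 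Therefore only case (1) can occur, giving the desired conclusion.

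There is essentially no obstacle: the heavy lifting is done by Theorem~\ref{tm:PMC-diferenciales-nulas}, and the only verification needed is that the local model determines the intrinsic curvature globally in cases (2) and (3), which is immediate because flatness and constant sectional curvature are local conditions. No additional calculation is required.
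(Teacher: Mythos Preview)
Your proof is correct and is exactly the argument the paper has in mind: the corollary is stated without proof immediately after Theorem~\ref{tm:PMC-diferenciales-nulas}, and the intended reasoning is precisely that on a sphere the holomorphic quadratic differentials $\Theta_1,\Theta_2$ must vanish, after which cases (2) and (3) of that theorem are excluded by Gauss--Bonnet since their induced metrics have nonpositive constant curvature. There is nothing to add.
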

The examples described in Theorem~\ref{tm:PMC-diferenciales-nulas}.3) and the examples obtained by Leite in \cite{L} can be characterized in the following way.
\begin{corollary}
Let $\Phi:\Sigma\rightarrow M^2(\epsilon)\times M^2(\epsilon)$ be a PMC-immersion of an orientable surface $\Sigma$. Then the extrinsic and normal extrinsic curvatures $\bar{K}$ and $\bar{K}^{\perp}$ are constant if and only if one of the two following possibilities happens
\begin{enumerate}[1.]
\item $\bar{K}=\bar{K}^{\perp}=0$ and $\Phi$ is locally congruent to some of the examples described in Example~\ref{ex:producto-curvas},
\item $\bar{K}=4|H|^2-1$, $\bar{K}^{\perp}=0$, and
 $\Phi$ is locally congruent either the example given in Theorem~\ref{tm:PMC-diferenciales-nulas}.3)  or the example described by Leite in \cite{L}.
\end{enumerate}
\end{corollary}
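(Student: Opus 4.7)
The plan is to reduce the hypothesis to the setting of Theorem~\ref{tm:PMC-diferenciales-nulas} by showing that both Hopf differentials must vanish, and then invoke that classification.

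First, passing to the two-fold oriented cover if necessary, I assume $\Sigma$ orientable. Since $\bar{K} = \epsilon(C_1^2+C_2^2)/2$ and $\bar{K}^{\perp} = \epsilon(C_1^2-C_2^2)/2$, constancy of both forces $C_1^2$ and $C_2^2$ to be constants, and by continuity on the connected $\Sigma$ each $C_j$ is itself a constant. Substituting $|\nabla C_j|^2 = 0$ and $\Delta C_j = 0$ into~\eqref{eq:laplaciano-C}, for each $j$ either $C_j \equiv 0$, or else $K = (4|H|^2+\epsilon(1+C_j^2))/2$ is a constant.

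Next, a case analysis. If $C_1 = C_2 = 0$, Theorem~\ref{tm:PMC-lagrangianas} gives $\Phi$ locally congruent to a surface in Example~\ref{ex:producto-curvas}, with $\bar{K} = \bar{K}^{\perp} = 0$: this is case 1 of the statement. If exactly one K\"ahler function vanishes, say $C_1 = 0$ while $C_2$ is a nonzero constant, then $\Phi$ is Lagrangian with respect to $J_1$ and Theorem~\ref{tm:PMC-lagrangianas} forces $\Phi$ locally into Example~\ref{ex:producto-curvas}, where $C_2 \equiv 0$ too: contradiction. The remaining possibility is that $C_1$ and $C_2$ are both nonzero constants; the two candidate expressions for $K$ must then agree, forcing $C_1^2 = C_2^2 =: C^2 \in (0,1)$ (the strict upper bound because $C^2 = 1$ would make $\Phi$ complex, hence minimal, contradicting $|H| > 0$).

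In this last case, plugging the constant $K = (4|H|^2+\epsilon(1+C^2))/2$ and $|\nabla C_j|^2 = 0$ into~\eqref{eq:gradiente-C-general} and simplifying yields
\[
16\, e^{-4u}\, |\Theta_j|^2 = -\bigl(1 - C^2 + 4\epsilon|H|^2\bigr)^2, \qquad j = 1, 2.
\]
Since the left side is non-negative and the right side is non-positive, both vanish. Thus $\Theta_1 = \Theta_2 = 0$ and $C^2 = 1 + 4\epsilon|H|^2$; positivity of $C^2$ rules out $\epsilon = 1$, leaving $\epsilon = -1$, $|H|^2 < 1/4$, $C^2 = 1 - 4|H|^2$, and $\bar{K} = -C^2 = 4|H|^2-1$. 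Theorem~\ref{tm:PMC-diferenciales-nulas} now classifies $\Phi$: its case 2 is excluded because $4|H|^2 \neq 1$, so $\Phi$ is either the example $\Phi_0$ of Theorem~\ref{tm:PMC-diferenciales-nulas}.3, or a CMC-immersion of $\h^2 \times \r$ with vanishing Abresh--Rosenberg differential. In this CMC case, the further data---constant angle function $\nu = C$, constant intrinsic curvature $K = 4|H|^2-1$, vanishing $\Theta_{AR}$, and $|H|^2 < 1/4$---identifies $\Phi$ locally with the embedding described by Leite in~\cite{L}, the identification being a rigidity statement following from Theorem~\ref{tm:relacion-PMC-CMC} applied to the pair of coinciding CMC data.

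The main obstacle I anticipate is the algebraic simplification at the start of the third paragraph: one must verify, by direct substitution and careful sign tracking, that the two factors on the right of~\eqref{eq:gradiente-C-general} combine into the perfect square $-(1-C^2+4\epsilon|H|^2)^2/(16\epsilon)$. Once that identity is in hand, the remainder is essentially bookkeeping from the previous theorems, and the converse direction (that each listed family has constant $\bar{K}$ and $\bar{K}^{\perp}$) follows immediately from the explicit K\"ahler functions of each family.
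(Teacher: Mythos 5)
Your proof is correct, and its overall architecture (reduce to constant $C_j$, show $C_1^2=C_2^2$ and $\Theta_1=\Theta_2=0$, then invoke Theorem~\ref{tm:PMC-diferenciales-nulas} and \cite{L}) matches the paper's. The middle portion, however, is carried out by a genuinely different computation. The paper exploits the first integrability equation in \eqref{eq:compatibilidad-PMC}: constancy of $C_j$ gives $(1-C_j^2)f_j=\tfrac{|H|}{\sqrt 2}\gamma_j^2$, and taking lengths together with \eqref{eq:modulo-fj} yields $C_1^2=C_2^2=\epsilon K$ in one stroke, for \emph{both} indices simultaneously; the equation $\Delta C_j=0$ then pins down $\epsilon=-1$ and $C_j^2=1-4|H|^2$ in the non-Lagrangian case, after which $\Theta_j=0$ is checked directly. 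You instead get $K$ from \eqref{eq:laplaciano-C} alone, which forces you to treat separately the case where exactly one $C_j$ vanishes (dispatched via Theorem~\ref{tm:PMC-lagrangianas}), and you then obtain $\Theta_j=0$ from the perfect-square identity $16e^{-4u}|\Theta_j|^2=-(1-C^2+4\epsilon|H|^2)^2$ coming out of \eqref{eq:gradiente-C-general}. That identity is a nice sign-definiteness argument and it delivers $C^2=1+4\epsilon|H|^2$ and the vanishing of the differentials at the same time; the paper's route is shorter and avoids the extra case split. One small wording slip: it is not ``positivity of $C^2$'' that rules out $\epsilon=1$ but the bound $C^2<1$ (which you did establish in the preceding paragraph, so the logic closes); positivity is what gives $|H|^2<1/4$ when $\epsilon=-1$. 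Everything else, including the identification of the CMC branch with Leite's example and the converse via the explicit K\"ahler functions, is sound.
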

\begin{proof}
First $\bar{K}$ and $\bar{K}^{\perp}$ are constant if and only if $C_j$, $j=1,2$ are constant. Also, the examples of Example~\ref{ex:producto-curvas} satisfy $C_j=0$, $j=1,2$ and the examples of Theorem~\ref{tm:PMC-diferenciales-nulas}.3) and the given by Leite satisfy $C_j^2=1-4|H|^2$ and $\epsilon=-1$.

On the other hand, if $C_j$, $j=1,2$ are constant, from the integrability equations~\eqref{eq:compatibilidad-PMC} we have that $(1-C_j^2)f_j=\frac{|H|}{\sqrt{2}}\gamma_j^2$, $j=1,2$, and hence, computing their lengths and using~\eqref{eq:modulo-fj}, $C_1^2=C_2^2=\epsilon K$. So either $C_j=0$, $j=1,2$ and we obtain 1) or $C_1=C_2$ is a non-null constant. In the latter, from \eqref{eq:laplaciano-C} we obtain that $\epsilon=-1$ and $C_j^2=1-4|H|^2$, $j=1,2$. Using all this information we can check that $\Theta_j=0$, $j=1,2$. The result is now a consequence of Theorem~\ref{tm:PMC-diferenciales-nulas} and \cite{L}.
\end{proof}

\section{Examples of CMC-surfaces in $M^2(\epsilon)\times\r$}

Following Theorem~\ref{tm:relacion-PMC-CMC}, the examples of PMC-surfaces of $M^2(\epsilon)\times M^2(\epsilon)$ des\-cri\-bed in Proposition~\ref{prop:familia-ejemplos-K-normal-0} have associated pairs of CMC-surfaces of $M^2(\epsilon)\times\r$. As these PMC-surfaces do not factorize through CMC-surfaces of $M^2(\epsilon)\times\r$, the pairs of CMC-surfaces are not congruent.

Let $\Phi:I\times\r\rightarrow M^2(\epsilon)\times M^2(\epsilon)$ be a CMC-surface associated to a solution $h$ of ~\eqref{eq:ecuacion-h} in Proposition~\ref{prop:familia-ejemplos-K-normal-0}. Following the proof of Theorem~\ref{tm:PMC-K-normal-nula}, the Frenet data associated to this immersion are given by
\begin{gather*}
u(z)=\log\sqrt{\epsilon(a-h^2(x))},\, C_1(z)=C_1(\bar{z})=C_2(z),\, f_2(z)=f_2(\bar{z})=\bar{f}_1(z),\\  \gamma_2(z)=-\bar{\gamma}_1(z),\quad \gamma_1(z)=\sqrt{2}|H|(1+i(h(x)-c)).
\end{gather*}
Hence the Frenet data associated to the pair $(\Phi_1,\Phi_2)$ of CMC-surfaces of $M^2(\epsilon)\times\r$ (see proof of Theorem~\ref{tm:relacion-PMC-CMC}) are given by
\begin{gather*}
u(z),\, \nu_1(z)=\nu_1(\bar{z})=\nu_2(z),\, p_2(z)=p_2(\bar{z})=\bar{p}_1(z),\\
\eta_1(z)=-2|H|(y+\int_{x_0}^x(h(t)-c)dt),\, \eta_2(z)=-2|H|(-y+\int_{x_0}^x(h(t)-c)dt).
\end{gather*}
 As the map $G:I\times\r\rightarrow I\times\r$ given by $G(z)=\bar{z}$ is an isometry of the induced metric $g=\epsilon(a-h^2(x))(\mathrm{d}x^2+\mathrm{d}y^2)$, it is easy to check that $\Phi_1\circ G$ is a CMC-surface with the same Frenet data than $\Phi_2$, and so $\Phi_1\circ G$ and $\Phi_2$ are congruent immersions, i.e. $\Phi_1$ and $\Phi_2$ are weakly congruent. So really there is only a CMC-immersion associated to each PMC-immersion of Proposition~\ref{prop:familia-ejemplos-K-normal-0}. In this case we can also integrate the Frenet equations of these immersions, obtaining the following family of examples.

\begin{proposition}\label{prop:familia-ejemplos-CMC}
Let $a,b,c$ be real numbers with $b>0$ and $h:I\subset\r\rightarrow\r$ a non-constant solution of the O.D.E.~\eqref{eq:ecuacion-h} satisfying $\epsilon(a-h^2(x))>b$, $\forall x\in I$. Then $\Psi = (\psi, \eta): I \times \r \rightarrow M^2(\epsilon)\times \r$ where $\eta(x,y) = \sqrt{b}\left( y+\int_{x_0}^x \bigl(h(t) - c\bigr)\mathrm{d}t \right)$ and $\psi:I\times\r\rightarrow M^2(\epsilon)$ is given by
\begin{enumerate}
  \item If $E = a - \epsilon b > 0$
  \[
    \psi(x,y) = \frac{1}{\sqrt{E}}\left( \sqrt{\epsilon(E - h(x)^2)} \cos (\sqrt{E}f),
    \sqrt{\epsilon(E - h(x)^2)}\sin (\sqrt{E} f),
    h(x) \right)
  \]
  \item If $E < 0$ (which implies $\epsilon=-1$)
  \[
    \psi(x,y) = \frac{1}{\sqrt{-E}}\left(h(x), \sqrt{h(x)^2 - E}\sinh (\sqrt{-E}f),  \sqrt{h(x)^2 - E}\cosh (\sqrt{-E}f) \right)
  \]
  \item If $E = 0$ (which implies $\epsilon = -1$)
  \[
    \psi(x, y) = h(x)\left(f^2 - \frac{1}{4} + \frac{1}{h(x)^2}, f,
    f^2 + \frac{1}{4} + \frac{1}{h(x)^2} \right)
  \]
\end{enumerate}
is a CMC-immersion, where in the three cases
\[
f(x, y) = y + \int_{x_0}^x \frac{b(c - h(t))}{\epsilon(E - h^2(t))} \mathrm{d} t
\]

All the examples described above satisfy $4|H|^2 = b$, they are conformal immersions with the induced metric given by $\epsilon(a-h^2(x))(\mathrm{d}x^2 + \mathrm{d}y^2)$ and their Abresh-Rosenberg differentials given by
\[
  \Theta_{AR} = \frac{\epsilon b}{8} \left(a + 1 - c^2 - 2ic\right) (\mathrm{d}z)^2
\]
\end{proposition}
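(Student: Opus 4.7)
The plan is to mirror the proof of Proposition~\ref{prop:familia-ejemplos-K-normal-0}: verify by direct computation that the stated formulas define a conformal immersion with constant mean curvature of length $\sqrt{b}/2$, and then compute the Abresh--Rosenberg differential. An alternative route, suggested by the paragraph preceding the proposition, is to use Theorem~\ref{tm:relacion-PMC-CMC} and the explicit Frenet data of the PMC-immersion of Proposition~\ref{prop:familia-ejemplos-K-normal-0}; I would use this route to cross-check the formula for $\Theta_{AR}$.

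First I would verify that $\psi$ takes values in $M^2(\epsilon)$. In the cases $E>0$ and $E<0$ this reduces to the identity $h^2+\epsilon(E-h^2)=\epsilon E$ combined with $\cos^2+\sin^2=1$ (respectively $\cosh^2-\sinh^2=1$); the case $E=0$ needs a separate algebraic check since the formulas are polynomial in $f$. The hypothesis $\epsilon(a-h^2)>b$ is precisely what forces $\epsilon(E-h^2)>0$, so that the square roots are real.

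Second, from $\eta_x=\sqrt{b}(h-c)$, $\eta_y=\sqrt{b}$, $f_x=b(c-h)/\epsilon(E-h^2)$, $f_y=1$, and the resulting expressions for $\psi_x,\psi_y$, I would check the conformality relations
\begin{gather*}
|\psi_x|^2+\eta_x^2=|\psi_y|^2+\eta_y^2=\epsilon(a-h^2),\qquad \langle\psi_x,\psi_y\rangle+\eta_x\eta_y=0,
\end{gather*}
invoking the ODE \eqref{eq:ecuacion-h} to absorb the $(h')^2$ contributions. I would then compute $\Psi_{xx}+\Psi_{yy}$ and divide its tangential part to $M^2(\epsilon)\times\r$ by $2\epsilon(a-h^2)$ to obtain $H$. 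The same type of cancellation as in Proposition~\ref{prop:familia-ejemplos-K-normal-0} should force $|H|^2=b/4$, and since $\Psi$ has codimension one in a $3$-manifold, $|H|$ constant implies $H$ parallel.

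For the Abresh--Rosenberg differential, the quickest argument is to apply Theorem~\ref{tm:relacion-PMC-CMC}: from the data assignment displayed just before the statement, $\Psi$ is one of the associated CMC-immersions of the PMC-immersion $\Phi$ of Proposition~\ref{prop:familia-ejemplos-K-normal-0}, so $2\Theta_{AR}=\Theta_1=\frac{\epsilon b}{4}(a+1-c^2-2ic)(dz)^2$ immediately yields the stated formula. Alternatively one can compute $\eta_z=\tfrac{\sqrt{b}}{2}\bigl((h-c)-i\bigr)$, extract the complex function $p$ from the Frenet equation $\Psi_{zz}=2u_z\Psi_z+pN+\epsilon\eta_z^2\hat\Psi$, and plug into $\Theta_{AR}=(|H|p-\frac{\epsilon}{2}\eta_z^2)(dz)^2$. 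The main obstacle is the bookkeeping in the case split $E>0,E<0,E=0$, since the trigonometric, hyperbolic, and polynomial formulas each generate long intermediate expressions that only collapse once \eqref{eq:ecuacion-h} is used; none of the three cases is conceptually harder than in Proposition~\ref{prop:familia-ejemplos-K-normal-0}, but all three have to be written out.
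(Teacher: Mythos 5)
Your proposal is correct and follows essentially the same route as the paper: a direct verification that $\Psi$ is conformal with conformal factor $\epsilon(a-h^2)$ (using the ODE \eqref{eq:ecuacion-h} to simplify the $(h')^2$ terms), followed by computing $H=(\Psi_{xx}+\Psi_{yy})^T/2\epsilon(a-h^2)$ to get $|H|^2=b/4$. Your shortcut for $\Theta_{AR}$ via Theorem~\ref{tm:relacion-PMC-CMC} and the Frenet data displayed before the proposition is legitimate and consistent with the paper, which simply asserts the computation is straightforward.
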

\begin{remark}~
  \begin{enumerate}
    \item Because $\epsilon(a - h^2(x)) > b > 0$ the parameters $a$, $b$ and $c$ have to satisfy~\eqref{eq:restricciones}. Reciprocally if $a$, $b$ and $c$ satisfy~\eqref{eq:restricciones} then there exists a non-constant solution $h:I \rightarrow \r$ with $\epsilon(a -h^2(x)) > 0$. Now from~\eqref{eq:ecuacion-h} $\epsilon(a - h^2(x)) - b \geq 0$ so, as $h$ is non constant, there exists $I' \subseteq I$ such that $\epsilon(a - h^2(x)) - b > 0$. Therefore $\epsilon(a - h^2(x)) > b$ (for a suitable interval $I'$) if and only if $a$, $b$ and $c$ satisfy~\eqref{eq:restricciones}.
    \item All these examples are invariant under the $1$-parametric group of isometries $\{I(\theta) \times \tau_\theta,\, \theta \in \r\}$ of $M^2(\epsilon) \times \r$, where $\tau_\theta: \r \rightarrow \r$ is $\tau_\theta(t) = t + \theta \sqrt{b}$ and $I(\theta): M^2(\epsilon) \rightarrow M^2(\epsilon)$ is the isometry given in remark~\ref{rm:familia-ejemplos-K-normal-0}.\ref{item:isometrias:rm:familia-ejemplos-K-normal-0}.
  \end{enumerate}
\end{remark}

\begin{proof}
First it is easy to check that, in the three cases,
\begin{gather*}
  |\psi_x|^2 = \epsilon (a - h^2(x)) - b(h(x)-c)^2, \quad |\psi_y|^2 = \epsilon(a - h^2(x)) - b, \\ \langle \psi_x, \psi_y\rangle = -b(h(x) - c),\quad  \langle \psi_x, \psi_{xy} \rangle = 0, \quad \langle \psi_y, \psi_{xy}\rangle = -\epsilon h(x)h'(x) .
\end{gather*}
So taking into account the definition of $\eta$ and that $\Psi = (\psi, \eta)$ we get
\[
  |\Psi_x|^2 = |\Psi_y|^2 = \epsilon(a - h^2(x)), \quad \langle \Psi_x, \Psi_y\rangle = 0,
\]
that is, $\Psi$ is a conformal immersion with conformal factor $\epsilon(a - h^2(x))$. Then its mean curvature vector field is given by $H = (\Psi_{xx} + \Psi_{yy})^T/2\epsilon(a-h^2(x))$, where $()^T$ denotes the tangential component to $M^2(\epsilon) \times \r$. So, by a direct computation we get
\begin{align*}
  H(x, y) = \frac{\sqrt{b}}{2} \left(\frac{\sqrt{b}}{h'(x)}\bigl((c-h(x)) \psi_x - \psi_y\bigr), \frac{h'(x)}{\epsilon(a-h^2(x))} \right)
\end{align*}
 From this we have that $\Psi$ is a CMC-immersion with $|H|^2 = b/4$ and it is straightforward to check that the associated Abresh-Rosenberg differential is:
\[
  2\Theta_{AR}(z) =  \frac{\epsilon b}{4}\bigl(a +1 - c^2 -2ic\bigr) (\mathrm{d}z)^2
\]
\end{proof}
From Theorem~\ref{tm:relacion-PMC-CMC} and Theorem~\ref{tm:PMC-K-normal-nula} we can obtain the following rigidity result for CMC-surfaces of $M^2(\epsilon)\times\r$.
\begin{corollary}\label{cor:rigidez}
Let $\Phi_1,\Phi_2:(\Sigma,g)\rightarrow M^2(\epsilon)\times\r$ be  two non-congruent CMC-isometric immersions of a simply-connected surface $\Sigma$ with the same mean curvatures $H_1=H_2$ and the same extrinsic curvatures $\bar{K}_1=\bar{K}_2$. Then $\Phi_1$ and $\Phi_2$ are weakly congruent, i.e. there exists an isometry $G$ of $(\Sigma,g)$ such that $\Phi_1\circ G$ and $\Phi_2$ are congruent, and $\Phi_1$ is either one of the examples of Proposition~\ref{prop:familia-ejemplos-CMC} or $\Phi_1$ is a cylinder over a curve of constant curvature of $M^2(\epsilon)$.
\end{corollary}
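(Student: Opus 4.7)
The strategy is to translate the hypothesis into a statement about the associated PMC-immersion in $M^2(\epsilon)\times M^2(\epsilon)$ and then apply the classification already obtained in Theorem~\ref{tm:PMC-K-normal-nula}. The crucial observation is that, for a surface $\Psi=(\psi,\eta):\Sigma\rightarrow M^2(\epsilon)\times\r$, the extrinsic curvature satisfies $\bar{K}_{\Psi}=\epsilon\,\nu^2$, where $\nu$ is the angle function. This follows because the curvature tensor of the product is $\bar{R}=\epsilon R_0\oplus 0$, so the sectional curvature of a unit $2$-plane is $\epsilon$ times the squared norm of its horizontal bivector part; a direct computation with $\partial_t=\alpha_1 e_1+\alpha_2 e_2+\nu N$ yields $|X_1\wedge X_2|^2=1-\alpha_1^2-\alpha_2^2=\nu^2$. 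Under the bijection of Theorem~\ref{tm:relacion-PMC-CMC} the Kähler function $C_j$ of the associated PMC-immersion $\Phi$ coincides with $\nu_j$, hence the hypothesis $\bar{K}_1=\bar{K}_2$ becomes $C_1^2=C_2^2$, i.e., $\bar{K}^{\perp}=0$ for $\Phi$.

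Next, I would invoke Theorem~\ref{tm:PMC-K-normal-nula}, which leaves three possibilities for $\Phi$. The first, a CMC-surface of $M^2(\epsilon)\times\r$, is ruled out by Theorem~\ref{tm:relacion-PMC-CMC}: that case is equivalent to $[\Phi_1]=[\Phi_2]$, contradicting the non-congruence hypothesis. The second case (the examples of Example~\ref{ex:producto-curvas}) is characterised by $C_1=C_2=0$, i.e. $\nu_j\equiv 0$, which forces each $\Phi_j$ to be vertical; since $|H_j|$ is constant, $\Phi_j$ must be a cylinder over a curve of constant curvature of $M^2(\epsilon)$. The third case gives PMC-surfaces described in Proposition~\ref{prop:familia-ejemplos-K-normal-0}, whose associated CMC-surfaces are, by the opening paragraphs of Section~5, precisely those of Proposition~\ref{prop:familia-ejemplos-CMC}.

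Finally, the weak congruence statement follows directly from the discussion preceding Proposition~\ref{prop:familia-ejemplos-CMC}: in the surviving cases $C_1=C_2$, $\bar f_1=f_2$ and $\bar\gamma_1=-\gamma_2$, so pulling back the Frenet data of $\Phi_2$ by the antiholomorphic isometry $G(z)=\bar z$ of $(\Sigma,g)$ yields exactly the Frenet data of $\Phi_1$. (Should Theorem~\ref{tm:PMC-K-normal-nula} be applied with $C_1=-C_2$ instead of $C_1=C_2$, composing $\Phi$ with the factor-swapping isometry of $M^2(\epsilon)\times M^2(\epsilon)$ brings us back to the previous situation.) The only non-routine ingredient is the identity $\bar K_j=\epsilon C_j^2$; the rest is bookkeeping on the correspondence of Theorem~\ref{tm:relacion-PMC-CMC} together with the enumeration of Theorem~\ref{tm:PMC-K-normal-nula}.
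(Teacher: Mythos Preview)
Your approach is essentially identical to the paper's: pass to the associated PMC-immersion $\Phi$ via Theorem~\ref{tm:relacion-PMC-CMC}, use $\bar K_j=\epsilon\nu_j^2=\epsilon C_j^2$ to conclude $\bar K^\perp=0$, apply Theorem~\ref{tm:PMC-K-normal-nula}, and exclude its first alternative by the non-congruence hypothesis.

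One small gap concerns the weak-congruence argument in the product-of-curves case. The relations $\bar f_1=f_2$ and $\gamma_2=-\bar\gamma_1$ that you invoke are established in the proof of Theorem~\ref{tm:PMC-K-normal-nula} only \emph{after} normalizing the holomorphic differential $\Omega=(\gamma_2-\gamma_1)\,dz$ in the case where $C_1=C_2$ is non-null; they are not shown to hold when $C_1=C_2=0$, and the discussion preceding Proposition~\ref{prop:familia-ejemplos-CMC} treats only the Proposition~\ref{prop:familia-ejemplos-K-normal-0} surfaces. The paper closes this by a separate explicit computation of the Frenet data when $\Phi(x,y)=(\alpha(x),\beta(y))$, obtaining $p_2=\bar p_1$ and $\eta_2(x,y)=\eta_1(-x,y)$, whence $G(x,y)=(-x,y)$ works. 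Your geometric observation that $\nu_j\equiv 0$ forces each $\Phi_j$ to be a vertical cylinder over a constant-curvature curve is correct and indeed makes weak congruence in that case immediate, but you should argue it directly rather than fold it into the $G(z)=\bar z$ claim.
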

\begin{proof}
From Theorem~\ref{tm:relacion-PMC-CMC}, let $\Phi:(\Sigma,g)\rightarrow M^2(\epsilon)\times M^2(\epsilon)$ be the PMC-isometric immersion associated to the pair $(\Phi_1,\Phi_2)$. Then its extrinsic normal curvature is given by $\bar{K}^{\perp}= \epsilon \frac{C_1^2 - C_2^2}{2} = \frac{\bar{K}_1-\bar{K}_2}{2}=0$. So, as $\Phi_1$ and $\Phi_2$ are not congruent, Theorem~\ref{tm:PMC-K-normal-nula} says that $\Phi$ is either one of the examples of
Proposition~\ref{prop:familia-ejemplos-K-normal-0} or the product of two curves of constant curvature. At the beginning of this section it was proved that, in the first case, $\Phi_1$ and $\Phi_2$ are weakly congruent and $\Phi_1$ is one of the examples of Proposition~\ref{prop:familia-ejemplos-CMC}. In the second case, if $\Phi$ is given by $\Phi(x,y)=(\alpha(x),\beta(y))$ with $\alpha$ and $\beta$ curves in $M^2(\epsilon)$ with constant curvature $k_{\alpha}$ and $k_{\beta}$ and $|\alpha'|=|\dot\beta|=1$, following Theorem~\ref{tm:relacion-PMC-CMC}, the Frenet data of $\Phi_1$ and $\Phi_2$ are given by $u = 0$, $H_1 = H_2$, $\nu_1 = \nu_2 = 0$, $p_2 = \bar{p}_1$ and $\eta_2(x, y) = \eta_1(-x, y) = -(k_\beta x + k_\alpha y)/\sqrt{k_\alpha^2 + k_\beta^2}$. As $G(x, y) = (-x ,y)$ is an isometry of the induced metric $g = dx^2 + dy^2$ then $\Phi_1 \circ G$ is a CMC-immersion with the same Frenet data than $\Phi_2$, and so $\Phi_1$ and $\Phi_2$ are weakly congruent. Now if $\Phi_1 = (\phi_1, \eta_1)$ it is not difficult to check that the image of $\phi_1$ is a curve in $M^2(\epsilon)$ with constant curvature $\sqrt{k_\alpha^2 + k_\beta^2}$ and the corollary is proved.
\end{proof}
Among the examples described in Proposition~\ref{prop:familia-ejemplos-CMC} there are some of them of particular interest that we are going to describe.

\begin{example}\label{ex:a<-1CMC}
We consider the $1$-parameter family of CMC-immersions in Proposition~\ref{prop:familia-ejemplos-CMC} associated to the PMC-immersions given in example~\ref{ex:a<-1-y-c=0} ($\epsilon=-1$, $c=0$, $b=1$). Following the notation, for each $\lambda> 0$,
 $\Psi_{\lambda} = (\psi_{\lambda}, \eta_{\lambda}): \r^2 \rightarrow \h^2\times \r$, where:
\[
  \begin{split}
  \psi_{\lambda}(x, y) &= \frac{\sqrt{1+\lambda^2}}{\lambda}\left( \sinh x, \cosh x \sinh y + \frac{\cosh y}{\sqrt{1+\lambda^2}},\cosh x \cosh y + \frac{\sinh y}{\sqrt{1+\lambda^2}}\right) \\
  \eta_{\lambda}(x, y) &= \frac{1}{\lambda}(y+\sqrt{1+\lambda^2} \cosh x) , \\
  \end{split}
\]
  is a {\it CMC-conformal isometric embedding of the complete surface \\ $\bigl(\r^2,\frac{1+\lambda^2}{\lambda^2}\cosh^2x(dx^2 + dy^2)\bigr)$ in $\h^2\times\r$ with $H=1/2$}. Its Abresh-Rosenberg differential is given by $(dz)^2/8$.
\end{example}

\begin{example}
  We consider the CMC-immersion in proposition~\ref{prop:familia-ejemplos-CMC} associated to the example $\Phi_0$ of Theorem~\ref{tm:PMC-diferenciales-nulas}. Following the notation, for each real number $0<H<1/2$, $\Psi_0=(\psi_0,\eta_0):]-\pi/2,\pi/2[\times\r\rightarrow\h^2\times\r$ given by
   \[
    \begin{split}
    \psi_0(x,y) &= \frac{1}{\sqrt{1-4H^2}} \left(\tan x,\frac{\sinh y}{\cos x} + 2H^2e^{-y}\cos x,\frac{\cosh y}{\cos x} -2H^2e^{-y}\cos x\right) \\
    \eta_0(x,y) &= \frac{2H}{\sqrt{1-4H^2}}\left(\, y - \log \cos x\right),
    \end{split}
  \]
is a {\it CMC-conformal isometric embedding with mean curvature H of the hyperbolic plane $\left(]-\pi/2,\pi/2[\times\r,1/(1-4H^2)\cos^2 x\right)$ with curvature $4H^2-1$ in $\h^2\times\r$}. Its Abresh-Rosenberg differential vanishes and it is a conformal reparametrization of the Leite example \cite{L}.
\end{example}

\begin{example}
Now we are going to obtain examples of CMC-tori in $\s^2\times\s^1$. To do that, first we need to get periodic solutions of the O.D.E.~\eqref{eq:ecuacion-h}. We consider $\epsilon=1$, $c=0$ and from \eqref{eq:restricciones} $a>b$, and them equation~\eqref{eq:ecuacion-h} becomes in
  \[
  (h')^2(x)=(a-h^2(x))\left(a - b -(1+b)h^2(x)\right)=q(h).
  \]
   As the roots of the polynomial q are $\pm\sqrt{a}, \pm\sqrt{(a-b)/(1+b)}$, formula 219.00 in \cite{BF} says that the solution $h:\r \rightarrow \r$ of the above equation with $h(0)=0$ is given by
  \[
  h(x) = \sqrt{\frac{a-b}{1+b}} \sn(\sqrt{a(1+b)}x)
  \]
  where $\sn$ is the sine amplitude Jacobi function with modulus $\kappa^2 = (a-b)/a(1+b)$. These solutions are periodic with period $4K(\kappa)/\sqrt{a(1+b)}$ where $K(\kappa)$ is the complete elliptic integral of the first kind.

  In this case
  \[
  \epsilon(a-h^2(x))=a(1-\kappa^2\sn^2(\sqrt{a(1+b)}x))=a\dn^2(\sqrt{a(1+b)}x)>0,\quad \forall x\in\r,
  \]
  where $\dn $ is the delta amplitude Jacobi function. Furthermore, $\epsilon(a-h^2(x)) > b$ because the minimum for the function $\dn$ is $\sqrt{1-\kappa^2}$ and it is easy to see that $a (1-\kappa^2) > b$ if and only if $a > b$.

 Now the function $f$ appearing in Proposition~\ref{prop:familia-ejemplos-CMC} is given by
 \[
 f(x, y) = y + \frac{1}{\sqrt{a-b}}\arctan \left( \frac{\cn(\sqrt{a(1+b)}x)}{\sqrt{a}\dn(\sqrt{a(1+b)}x)}\right),
 \]
 where $\cn$ is the cosine amplitude Jacobi function. Then, up to the reparametrization $(x,y)\mapsto \frac{1}{\sqrt{a(1+b)}}(x,y)$, the associated  CMC-immersion $\Phi_{a,b} = (\phi_{a,b}, \eta_{a,b}):\r^2 \rightarrow \s^2 \times \r$ is given by:
  \[
    \begin{split}
       \phi_{a,b}(x, y) &= \left(\frac{\sqrt{a}\dn x\cos (\kappa y)-\cn x\sin (\kappa y)}{\sqrt{1+a}},\frac{\sqrt{a}\dn x\sin (\kappa y)+\cn x\cos (\kappa y)}{\sqrt{1+a}}, \frac{\sn x}{\sqrt{1+b}} \right)\\
        \eta_{a,b}(x, y) &= \frac{\sqrt{b}}{\sqrt{1+b}} \log ( \dn x - \kappa \cn x) + \frac{\sqrt{b}}{\sqrt{a(1+b)}} y.
    \end{split}
  \]
  We consider the local isometry $t\in\r\mapsto \frac{\sqrt b}{\sqrt{a-b}}e^{i\frac{\sqrt{a-b}}{\sqrt b}t}\in\s^1(\frac{\sqrt b}{\sqrt{a-b}})$ and the CMC-immersion
  \[
  \hat{\Phi}_{a,b}=(\phi_{a,b},\hat{\eta}_{a,b}):\r^2\rightarrow \s^2\times\s^1(\frac{\sqrt b}{\sqrt{a-b}}),
  \]
  where $\hat{\eta}_{a,b}(x,y)=\frac{\sqrt b}{\sqrt{a-b}}e^{i\frac{\sqrt{a-b}}{\sqrt{1+b}}\log (\dn x - \kappa \cn x)}e^{i \kappa y}.$

  It is clear that $\hat{\Phi}$ is invariant under the group  $G_{a,b}$ of transformations of $\r^2$ generated by
  \[
  (x,y)\mapsto (x+4K(\kappa),y),\quad (x,y)\mapsto \left(x,y+\frac{2\pi}{\kappa}\right).
  \]
  If $T_{a,b}=\r^2/G_{a,b}$ is the associated torus and $P:\r^2\rightarrow T_{a,b}$ the projection, then the induced immersion
  \[
  \tilde{\Phi}_{a,b}:T_{a,b}\rightarrow \s^2\times\s^1(\frac{\sqrt b}{\sqrt{a-b}}),\quad P(x,y)\mapsto \hat{\Phi}_{a,b}(x,y),
  \]
defines a CMC-conformal immersion of the torus $T_{a,b}$ into $\s^2\times\s^1(\frac{\sqrt b}{\sqrt{a-b}}).$

We are going to see that $\tilde{\Phi}_{a,b}$ is an embedding. In fact, if $\tilde{\Phi}_{a,b}(P(x,y))=\tilde{\Phi}_{a,b}(P(\hat{x},\hat{y}))$, with $x,\hat{x}\in [0,4K(\kappa)[,\, y,\hat{y}\in[0,2\pi/\kappa[$, then we have that $\sn x=\sn \hat{x}$ and then either $x=\hat{x}$ or $x,\hat{x}\in[0,2K(\kappa)]$ and $x+\hat{x}=2K(\kappa)$ or  $x,\hat{x}\in[2K(\kappa),4K(\kappa)]$ and $x+\hat{x}=6K(\kappa)$. In the first case, looking at the immersion we obtain that $y=\hat{y}$. In the other two cases, $\cn \hat{x}=-\cn x$ and $\dn \hat{x}=\dn{x}$. So, looking again at the immersion we easily get that
\[
\cos (\kappa\hat{y}-\kappa y)=\frac{a\dn^2x-\cn^2x}{a\dn^2x+\cn^2x},\, \cos (\kappa\hat{y}-\kappa y)=\cos\log\left(\left(\frac{\dn x-k\cn x}{\dn x+k\cn x}\right)^{\frac{\sqrt{a-b}}{\sqrt{1+b}}}\right).
\]
From these equations we obtain that $x=K(\kappa)$ or $x=3K(\kappa)$, which implies that $\hat{x}=x$. Again, $y=\hat{y}$, and so our immersion is an embedding.

We can summarize the above reasoning in the following result.
\begin{proposition}\label{prop:toros}
For each pair of real numbers $0<b<a$, the immersion $\hat{\Phi}_{a,b}:T_{a,b}\rightarrow\s^2\times\s^1(\frac{\sqrt b}{\sqrt{a-b}})$ described above is a CMC-conformal embedding of the rectangular torus $T_{a,b}$ with mean curvature $H=\sqrt{b}/2$. Its Abresh-Rosenberg differential is $\Theta_{AR} = (b(1+a)/8a(1+b)) (dz)^2$.
\end{proposition}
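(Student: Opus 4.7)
The plan is to read off the four assertions of the proposition from the explicit construction built up in the preceding paragraphs, leaving only the embeddedness to be argued directly.

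First I would invoke Proposition~\ref{prop:familia-ejemplos-CMC} with $\epsilon=1$, $c=0$, and the periodic solution $h(x)=\sqrt{(a-b)/(1+b)}\,\sn(\sqrt{a(1+b)}\,x)$ of~\eqref{eq:ecuacion-h}. The hypothesis $\epsilon(a-h^2(x))>b$ reduces, using $\dn\geq\sqrt{1-\kappa^2}$ with $\kappa^2=(a-b)/(a(1+b))$, to $a>b$, which is given. This yields a conformal CMC-immersion of $\r^2$ into $\s^2\times\r$ with $|H|^2=b/4$ and Abresh--Rosenberg differential $\frac{b(1+a)}{8}(dz)^2$ in the original parameters; the affine reparametrization that normalizes the formulas to those displayed just above the statement scales $(dz)^2$ by the factor $1/(a(1+b))$, producing the value $\frac{b(1+a)}{8a(1+b)}(dz)^2$. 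Composing the $\r$-factor with the local isometry $t\mapsto(\sqrt{b}/\sqrt{a-b})\exp(i\sqrt{(a-b)/b}\,t)$ onto $\s^1(\sqrt{b}/\sqrt{a-b})$ preserves conformality, the mean-curvature length, and the Abresh--Rosenberg differential. Then $G_{a,b}$-invariance is immediate: the $4K(\kappa)$-periodicity of $\sn,\cn,\dn$ (and of $\dn x-\kappa\cn x$ inside the logarithm) handles the first generator, and translation by $2\pi/\kappa$ in $y$ advances the argument of the exponential defining $\hat{\eta}_{a,b}$ by $2\pi$, handling the second. Consequently $\tilde\Phi_{a,b}$ descends to a well-defined conformal CMC-immersion of $T_{a,b}$ with mean curvature $\sqrt{b}/2$ and the stated Abresh--Rosenberg differential.

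The step that requires genuine work, and that I expect to be the main obstacle, is injectivity of $\tilde\Phi_{a,b}$ on the fundamental domain $[0,4K(\kappa))\times[0,2\pi/\kappa)$. Suppose $\hat\Phi_{a,b}(x,y)=\hat\Phi_{a,b}(\hat x,\hat y)$; matching the third $\s^2$-coordinate gives $\sn x=\sn\hat x$, leaving the three alternatives $\hat x=x$, $x+\hat x=2K(\kappa)$, or $x+\hat x=6K(\kappa)$. The first two $\s^2$-coordinates can be packaged as $(\sqrt{a}\,\dn x-i\cn x)e^{i\kappa y}=(\sqrt{a}\,\dn\hat x-i\cn\hat x)e^{i\kappa\hat y}$, which in the case $\hat x=x$ forces $e^{i\kappa(\hat y-y)}=1$ and hence $\hat y=y$. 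In the other two alternatives $\cn\hat x=-\cn x$ and $\dn\hat x=\dn x$, and equating moduli is automatic while equating arguments of the $\s^2$-expression and separately of the $\s^1$-factor produces the two expressions for $\cos(\kappa(\hat y-y))$ displayed immediately before the proposition. The crux is the algebraic/transcendental comparison showing that the rational function $(a\dn^2 x-\cn^2 x)/(a\dn^2 x+\cn^2 x)$ and the cosine of $\frac{\sqrt{a-b}}{\sqrt{1+b}}\log\bigl((\dn x-\kappa\cn x)/(\dn x+\kappa\cn x)\bigr)$ coincide only when $\cn x=0$, i.e.\ at $x\in\{K(\kappa),3K(\kappa)\}$; at such a point one has $\hat x=x$, reducing to the first case and yielding $\hat y=y$. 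Once this transcendental uniqueness is established, $\tilde\Phi_{a,b}$ is injective and therefore an embedding.
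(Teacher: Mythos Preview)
Your proposal is correct and follows essentially the same route as the paper: the proposition is stated there as a summary of the preceding construction, and your outline reproduces that construction step for step---invoking Proposition~\ref{prop:familia-ejemplos-CMC} with $\epsilon=1$, $c=0$ and the periodic Jacobi solution, checking $a\dn^2>b$, reparametrizing, projecting to $\s^1$, verifying $G_{a,b}$-invariance, and then running the injectivity argument via the trichotomy on $\sn x=\sn\hat x$. The one step you flag as ``the crux'' (the coincidence of the two expressions for $\cos(\kappa(\hat y-y))$ forcing $\cn x=0$) is handled in the paper by exactly the same bare assertion, ``From these equations we obtain that $x=K(\kappa)$ or $x=3K(\kappa)$'', without further justification; so your proposal is at the same level of completeness as the paper's own argument.
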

\begin{remark}
It is clear that $\tau:T_{a,b}\rightarrow T_{a,b}$ defined by $$\tau(P(x,y))=P\left(-x,y+\frac{\pi}{k}\right)$$ is an isometry of $T_{a,b}$ with $\tau^2=\mathrm{Id}$. Because $\tilde{\Phi}_{a,b}\bigl(\tau P(x, y)\bigr) = -\tilde{\Phi}_{a,b}\bigl( P(x, y) \bigr)$ for all $(x, y) \in \r^2$, $\tilde{\Phi}_{a,b}$ induces a CMC-embedding of the Klein bottle $B_{a, b} = T_{a,b}/\langle\tau\rangle$ in $\mathbb{RP}^2 \times \mathbb{RP}^1(\sqrt{b}/\sqrt{a-b})$, where $\mathbb{RP}^2$ denotes the real projective plane with constant curvature $1$ and $\mathbb{RP}^1(\sqrt{b}/\sqrt{a-b})$ denotes the real projective line with constant curvature $\sqrt{a-b}/\sqrt{b}$.
\end{remark}
\end{example}

\section{Compact PMC-surfaces}
In this section we are going to prove some properties of {\it compact} PMC-surfaces of $M^2(\epsilon)\times M^2(\epsilon)$. Let $\Phi:\Sigma\rightarrow M^2(\epsilon)\times M^2(\epsilon)$ be an PMC-immersion of an orientable surface $\Sigma$. We define two vector fields $X_j,\,j=1,2$, tangent to $\Sigma$ as the tangential components of $J_j\tilde{H}$,
\[
J_1\tilde{H}=X_1+C_1H, \quad J_2\tilde{H}=X_2-C_2H.
\]
In particular we have that $|X_j|^2=|H|^2(1-C_j^2),\, j=1,2$.
Differentiating these equations and taking tangential components we obtain that
\[
\nabla_vX_1=C_1\,A_{H}v-C_1J^{\Sigma}A_{\tilde{H}}v,\quad\nabla_vX_2=-C_2\,A_{H}v-C_2J^{\Sigma}A_{\tilde{H}}v
\]
for any tangent vector $v$, where $J^{\Sigma}$ is the complex structure of the Riemann surface $\Sigma$.
From here we obtain that the divergence of $X_j$ and the differential of the $1$-forms $\alpha^{X_j}(v)=\langle X_j,v\rangle$ are given by
\begin{equation}
\hbox{div}\,X_j=(-1)^{j+1}2C_j|H|^2,\quad d\alpha^{X_j}=0,\,j=1,2.
\label{eq:divergencia-X}
\end{equation}
Now, using the above properties and that $|X_j|^2=|H|^2(1-C_j^2)$, the Bochner formula becomes in
\[
\frac{1}{2}\Delta(1-C_j^2)=K(1-C_j^2)+(-1)^{j+1}2\langle \nabla C_j,X_j\rangle+\frac{|\nabla X_j|^2}{|H|^2},\quad j=1,2.
\]
Using now the expression of the covariant derivative of $X_j$ we finally get that
\begin{equation}
\frac{1}{2}\Delta(1-C_j^2)=K(1-C_j^2)+(-1)^{j+1}2\langle \nabla C_j,X_j\rangle+2C_j^2(\epsilon C_j^2+2|H|^2-K),
\label{eq:gradiente-1-C^2}
\end{equation}
On the other hand, as $\Delta(1-C_j^2)=-2C_j\Delta C_j-2|\nabla C_j|^2,\, j=1,2$, from equation \eqref{eq:gradiente-1-C^2} and \eqref{eq:laplaciano-C}, we obtain that
\begin{equation}
|\nabla C_j|^2=(1-C_j^2)(\epsilon C_j^2-K)+(-1)^{j}2\langle \nabla C_j,X_j\rangle,\quad j=1,2.
\label{eq:norma-gradiente-C-respecto-X}
\end{equation}

All these formulae have some consequences when the surface is compact.
\begin{proposition}\label{prop:PMC-compacta}
Let $\Phi=(\phi,\psi):\Sigma\rightarrow M^2(\epsilon)\times M^2(\epsilon)$ be an immersion of a {\bf compact} orientable surface with parallel mean curvature vector. Then
\begin{enumerate}[1.]
\item $\int_{\Sigma}C_j\,dA=0,\quad j=1,2.$
\item If $\epsilon=1$, then the degrees of $\phi$ and $\psi$ are zero.
\item If $K\geq 0$, then either $\Phi(\Sigma)$ is a CMC-sphere of $\s^2\times\r$ with $4|H|^2\geq 1$ or $\Phi(\Sigma)$ is a torus of Example~\ref{ex:producto-curvas}.
\item If $\epsilon=1$ then $K$ cannot be negative. If $\epsilon=-1$, then $K$ cannot be less than $-1$.
\item If some of the holomorphic differentials $\Theta_j$ vanishes, then also vanishes the other and so $\Phi(\Sigma)$ is a CMC-sphere of $M^2(\epsilon)\times\r$.
\end{enumerate}
\end{proposition}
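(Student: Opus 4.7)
The plan is to treat the five items in turn: (1) and (2) are direct consequences of Stokes' theorem, (3) combines Gauss--Bonnet with the classification results already proved, and (4)--(5) exploit integrated identities derived from the Bochner-type formula \eqref{eq:gradiente-1-C^2}. For (1), integrating $\mathrm{div}(X_j) = (-1)^{j+1}2C_j|H|^2$ from \eqref{eq:divergencia-X} over the closed surface $\Sigma$ and using $|H|>0$ yields $\int_\Sigma C_j\,dA=0$. For (2), with $\epsilon=1$ the sphere $\s^2$ has area $4\pi$, so
$\deg\phi=\tfrac{1}{4\pi}\int_\Sigma\phi^*\omega=\tfrac{1}{4\pi}\int_\Sigma\mathrm{Jac}(\phi)\,dA=\tfrac{1}{8\pi}\int_\Sigma(C_1+C_2)\,dA=0$ by (1), and analogously $\deg\psi=0$ using $\mathrm{Jac}(\psi)=(C_1-C_2)/2$.

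For (3), Gauss--Bonnet together with $K\ge 0$ forces $\chi(\Sigma)\ge 0$, so $\Sigma$ is either a sphere or a torus. On the sphere, Corollary \ref{cor:PMC-spheres} gives that $\Phi$ factors through a CMC-sphere of $M^2(\epsilon)\times\r$, and a direct inspection of the rotationally symmetric Abresh--Rosenberg spheres (with their explicit profile curves) shows that $K\ge 0$ everywhere holds only when $\epsilon=1$ and $4|H|^2\ge 1$. On the torus, $\int_\Sigma K\,dA=0$ together with $K\geq 0$ forces $K\equiv 0$; substituting into the identity displayed below for (4) gives $\int_\Sigma C_j^4\,dA=0$, hence $C_1=C_2=0$; the surface is then Lagrangian with respect to both K\"ahler structures, and Theorem \ref{tm:PMC-lagrangianas} identifies it as a product of curves of constant curvature, which by compactness must be one of the tori $T_{a,\hat a}$ or $\hat T_{a,\hat a}$ of Example \ref{ex:producto-curvas}.

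For (4), the key tool is obtained by integrating \eqref{eq:gradiente-1-C^2} over $\Sigma$ and using the divergence computation from (1) to absorb the $\langle\nabla C_j,X_j\rangle$ term; after simplification one arrives at
\[
2\pi\chi(\Sigma)=3\int_\Sigma K\,C_j^2\,dA-2\epsilon\int_\Sigma C_j^4\,dA,\qquad j=1,2.
\]
Suppose $\epsilon=1$ and $K<0$ on all of $\Sigma$. If $\chi(\Sigma)\ge 0$, the identity forces $\int_\Sigma K\,C_j^2\,dA\ge 0$; combined with $K<0$ this gives $C_j\equiv 0$ for $j=1,2$, and then by Theorem \ref{tm:PMC-lagrangianas} the surface is a flat product of curves, contradicting $K<0$. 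For the residual $\chi<0$ case I would sum the $j=1,2$ identities and use $\bar K=(C_1^2+C_2^2)/2\ge 0$ together with the pointwise upper bound $K\le |H|^2+C_j^2$ coming from $|f_j|^2\ge 0$ in \eqref{eq:modulo-fj}. The parallel reasoning handles $\epsilon=-1$, $K<-1$, now using $\bar K=-(C_1^2+C_2^2)/2\ge -1$ and $K\le |H|^2-C_j^2$.

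For (5), assume $\Theta_1=0$. The explicit form $\Theta_1=(2\sqrt{2}|H|f_1+\tfrac{\epsilon}{2}\gamma_1^2)(dz)^2$ combined with $|\gamma_1|^2=e^{2u}(1-C_1^2)/2$ and \eqref{eq:modulo-fj} forces the pointwise relation $K=|H|^2+\epsilon C_1^2-\tfrac{(1-C_1^2)^2}{16|H|^2}$. Passing to the universal cover and applying Theorem \ref{tm:relacion-PMC-CMC}, the associated CMC-immersion $\tilde\Phi_1$ into $M^2(\epsilon)\times\r$ has vanishing Abresh--Rosenberg differential; by the Abresh--Rosenberg classification its image is rotationally symmetric, and compactness of $\Sigma$ (together with an equivariance argument ruling out non-spherical descents such as the Leite-type hyperbolic-plane examples of \cite{L}) forces $\Sigma=S^2$. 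Since every holomorphic quadratic differential on $S^2$ vanishes, $\Theta_2=0$ as well, and Theorem \ref{tm:PMC-diferenciales-nulas} identifies $\Phi$ as a CMC-sphere of $M^2(\epsilon)\times\r$. The main obstacles will be the higher-genus analysis needed to complete (4) and the Leite-exclusion step in (5); both lie outside the pure integrability equations and require bringing in structural information about the specific classified examples.
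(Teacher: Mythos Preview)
Your arguments for (1), (2), and (3) are correct and essentially the same as the paper's.

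For (4), however, your argument has a fatal gap: the case $\chi(\Sigma)\ge 0$ that you treat is \emph{vacuous}. If $K<0$ everywhere on the compact surface then Gauss--Bonnet immediately gives $2\pi\chi(\Sigma)=\int_\Sigma K\,dA<0$, so only the case $\chi(\Sigma)<0$ ever arises, and that is precisely the case you leave unfinished. The sketch you offer (summing over $j$ and using the bounds from \eqref{eq:modulo-fj}) does not close: those inequalities give no useful sign control on $3\int K\,C_j^2-2\epsilon\int C_j^4$ when $\chi<0$. The paper uses a completely different, pointwise argument. From \eqref{eq:norma-gradiente-C-respecto-X}, every critical point $p$ of $C_j$ satisfies $(1-C_j^2(p))(\epsilon C_j^2(p)-K(p))=0$; under $\epsilon=1,\ K<0$ (or $\epsilon=-1,\ K<-1$) the second factor cannot vanish, so all critical points have $C_j^2(p)=1$. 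Since $K\neq\epsilon$ everywhere, Proposition~\ref{prop:C-tiene-puntos-aislados} makes these points isolated and nondegenerate; hence $C_j$ is a Morse function whose only critical points are absolute extrema, forcing $\Sigma$ to be a sphere and contradicting $\int K<0$.

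For (5) your route through the universal cover and the Abresh--Rosenberg classification is unnecessarily heavy and, as you acknowledge, leaves the descent/Leite-exclusion step open. The paper stays on $\Sigma$: combining \eqref{eq:gradiente-C-general} (with $\Theta_1=0$), \eqref{eq:norma-gradiente-C-respecto-X}, and the divergence identity $\int\langle\nabla C_1,X_1\rangle\,dA=-2|H|^2\int C_1^2\,dA$ yields
\[
16|H|^2\int_\Sigma K\,dA=\int_\Sigma\bigl(4|H|^2+\epsilon(1-C_1^2)\bigr)^2\,dA\ge 0,
\]
so $\chi(\Sigma)\ge 0$. If $\Sigma$ is a sphere then $\Theta_2=0$ automatically. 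If $\Sigma$ is a torus, $\int K=0$ forces $4|H|^2+\epsilon(1-C_1^2)\equiv 0$; but then (or more directly via the torus case of (3) and Lemma~\ref{lm:diferencial-AR}) one lands among the products of Example~\ref{ex:producto-curvas}, all of which have $\Theta_1\neq 0$, a contradiction.
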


\begin{proof}
Integrating the first equation of \eqref{eq:divergencia-X} we prove (1). If $\epsilon=1$, then $\phi,\psi:\Sigma\rightarrow \s^2$ are maps such that (see section 3)
\[
\phi^*\omega=\frac{C_1+C_2}{2}\omega_{\Sigma},\quad \psi^*\omega=\frac{C_1-C_2}{2}\omega_{\Sigma},
\]
which proves (2) making use of (1).

If $K\geq 0$, then either $\Sigma$ is a sphere and Theorem~\ref{tm:PMC-diferenciales-nulas} proves that it is a CMC-sphere of $M^2(\epsilon)\times\r$, or $\Sigma$ is a flat torus. In the first case only the CMC-spheres of $\s^2\times\r$ with $4|H|^2\geq 1$ have non-negative Gauss curvature. In the second case, from \eqref{eq:divergencia-X} we have that
\begin{equation}
0=\int_{\Sigma}\hbox{div}\,(C_jX_j)\,dA=\int_{\Sigma}\langle \nabla C_j,X_j\rangle\,dA+(-1)^{j+1}2|H|^2\int_{\Sigma}C_j^2\,dA,
\label{eq:ecucion-integral-divergencia-C-X}
\end{equation}
that, joint with the integration of equation~\eqref{eq:gradiente-1-C^2}, gives us
\[
0=\int_{\Sigma}(K(1-3C_j^2)+2\epsilon C_j^4)\,dA.
\]
As $\Sigma$ is flat, we obtain that $C_j=0$, $j=1,2$, and so $\Phi(\Sigma)$ is a torus of Example~\ref{ex:producto-curvas}.

Now we prove (4). From \eqref{eq:norma-gradiente-C-respecto-X}, $p$ is a critical point of $C_j$ if and only if either $C_j^2(p)=1$ or $K(p)=\epsilon C_j^2(p)$. If $\epsilon=1$ and $K<0$ or $\epsilon=-1$ and $K<-1$ the second possibility cannot happen and hence all the critical points satisfy $C^2(p)=1$. Taking into account Proposition~\ref{prop:C-tiene-puntos-aislados}, the function $C_j$ is a Morse function with only maximum and minimum as critical points. Therefore the surface must be a sphere, but the Gauss-Bonnet theorem gives a contradiction. This proves (4).

Finally if some of the holomorphic differentials vanishes, i.e. $\Theta_1=0$, then from \eqref{eq:gradiente-C-general}, \eqref{eq:norma-gradiente-C-respecto-X} and \eqref{eq:ecucion-integral-divergencia-C-X} we obtain that
\[
16|H|^2\int_{\Sigma}K\,dA=\int_{\Sigma}(4|H|^2+\epsilon(1-C_1^2))^2\,dA.
\]
In particular $\int_{\Sigma}K\,dA\geq 0$ and again either $\Sigma$ is a sphere and so $\Theta_2=0$ or $\Sigma $ is a torus in Example~\ref{ex:producto-curvas} with $\Theta_1=0$, which is impossible looking at Lemma~\ref{lm:diferencial-AR}.
\end{proof}

\end{document}